\def \det {\text{\rm det}}
\def \tr {\text{\rm Tr}}
\def \Tr {\text{\rm Tr}}
\def \sgn {\text{sgn}}
\def \RR {\mathbb R}
\def \ZZ {\mathbb Z}
\def \eps {\varepsilon}
\def \vphi {\varphi}
\def \div {{\textrm{div}}}
\newtheorem{theorem}{Theorem}[section]
\newtheorem{lemma}[theorem]{Lemma}
\newtheorem{proposition}[theorem]{Proposition}
\newtheorem{corollary}[theorem]{Corollary}
\newtheorem{remark}[theorem]{Remark}
\def\myffrac#1#2 in #3{\raise 2.6pt\hbox{$#3 #1$}\mkern-1.5mu\raise 0.8pt\hbox{$
#3/$}\mkern-1.1mu\lower 1.5pt\hbox{$#3 #2$}}
\def\qed{\hfill $\vcenter{\hrule height .3mm
\hbox {\vrule width .3mm height 2.1mm \kern 2mm \vrule width .3mm
height 2.1mm} \hrule height .3mm}$ \bigskip}
\begin{document}
	
\title{Unimodal value distribution of Laplace eigenfunctions and a monotonicity formula}

\author{Bo'az Klartag}
\date{}

\maketitle

\begin{abstract}
Let $M$ be a compact, connected Riemannian manifold whose  Riemannian volume measure is denoted by $\sigma$. Let $f: M \rightarrow \RR$ be a non-constant eigenfunction of the Laplacian. The random wave conjecture suggests that in certain situations, the value distribution of $f$ under $\sigma$ is approximately Gaussian. Write $\mu$ for the measure whose density with respect to $\sigma$ is $|\nabla f|^2$. We observe that the value distribution of  $f$ under $\mu$ admits a unimodal density attaining its maximum at the origin.
Thus, in a sense, the zero set of an eigenfunction is the largest of all level sets.
When $M$ is a manifold with boundary, the same holds for Laplace eigenfunctions satisfying either the Dirichlet or the Neumann boundary conditions. Additionally, we prove a monotonicity formula for level sets of solid spherical harmonics, essentially by viewing nodal sets of harmonic functions  as weighted minimal hypersurfaces.
\end{abstract}
\section{Introduction}

Consider an eigenfunction $f$ of the Laplacian on a compact, $n$-dimensional, $C^{\infty}$-smooth Riemannian manifold $M$.
Let $\mu$ be a non-zero finite, Borel measure on $M$, and let $X$ be a random point in $M$, distributed according to the probability measure that is proportional to $\mu$.
The density function of the real-valued random variable $f(X)$ is referred to as the {\it value distribution density} of $f$ under $\mu$.

\medskip
One of the first measures to look at is of course the uniform measure $\sigma$, the Riemannian volume measure.
The random wave conjecture
of Berry \cite{berry} suggests
that under ergodicity assumptions, in the generic case ``a random wave is a random function''. That is, a Laplace eigenfunction corresponding to a large eigenvalue should have a value distribution density under $\sigma$ that is approximately Gaussian. We refer to  Jakobson, Nadirashvili and Toth \cite{JNT} and to Zelditch \cite{Z} for background
on eigenfunctions of the Laplacian on Riemannian manifolds.

\medskip Let us discuss the example of the standard flat torus $M = \RR^n / \ZZ^n$, even though its geodesic flow is not ergodic. In this case, the eigenfunctions of the simplest form are perhaps
\begin{equation}  f_k(x_1,\ldots,x_n) = \sin(2 \pi k x_1) \qquad \qquad (k=0,1,2\ldots), \label{eq_1032} \end{equation}
defined for $x = (x_1,\ldots,x_n) \in \RR^n / \ZZ^n$. Then $\Delta f_k = - 4 \pi^2 k^2 f_k$, and the eigenvalue  $-4 \pi^2 k^2$ tends to $-\infty$ with $k$. Nevertheless, the value distribution density of $f_k$ under the Riemannian volume measure is independent of $k$, and it equals
\begin{equation} t \mapsto \frac{1}{\pi \sqrt{1-t^2}} \qquad \qquad \qquad \text{for} \ t \in (-1,1). \label{eq_1020_} \end{equation}
This density function is unbounded, and its graph bears little resemblance to the graph of the Gaussian density $e^{-t^2/2} / \sqrt{2 \pi}$.
In fact, the density function in (\ref{eq_1020_}) is not even unimodal with a maximum at the origin, but quite the opposite.
Here  we note that this may be fixed if one modifies the Riemannian volume measure $\sigma$ in a simple manner.

\begin{theorem} Let $M$ be a compact, connected, Riemannian manifold, and let $f: M \rightarrow \RR$ be a non-constant Laplace eigenfunction.
Consider the measure $\mu$ whose density with respect to the Riemannian volume measure is the function $|\nabla f|^2$. Then the value distribution density of the function $f$ under $\mu$ is continuous and unimodal with a maximum at zero.

\medskip That is, the value distribution density $\psi: \RR \rightarrow [0, \infty)$ is a continuous function, supported on the interval $(\inf f, \sup f)$, strictly-increasing on $[\inf f, 0]$ and strictly-decreasing on $[0, \sup f]$.
\label{thm_1033}
\end{theorem}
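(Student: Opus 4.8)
The plan is to write down an explicit formula for the value distribution density and then to differentiate it. Let $\lambda$ be the eigenvalue, so that $\Delta f = -\lambda f$; since $f$ is non-constant on the connected manifold $M$ we have $\lambda > 0$. Write $Z = \int_M |\nabla f|^2 \, d\sigma$ for the total mass of $\mu$. Applying the coarea formula after writing the density of $\mu$ as $|\nabla f|^2 = |\nabla f| \cdot |\nabla f|$ and cancelling one factor of $|\nabla f|$ against the coarea Jacobian shows that $f_* \mu$ is absolutely continuous and that the normalized value distribution density is
$$ \psi(t) = \frac{1}{Z} \int_{f^{-1}(t)} |\nabla f| \, d\mathcal{H}^{n-1} \qquad \text{for almost every } t, $$
where $\mathcal{H}^{n-1}$ is $(n-1)$-dimensional Hausdorff measure. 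By Sard's theorem almost every $t$ is a regular value of the smooth function $f$, in which case $\{ f > t\}$ is a domain whose boundary is the smooth hypersurface $f^{-1}(t)$ with outward unit normal $\nu = -\nabla f / |\nabla f|$.

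First I would turn this surface integral into a solid integral using the eigenvalue equation. For a regular value $t$, the divergence theorem on $\{f > t\}$ together with $\partial_\nu f = \nabla f \cdot \nu = -|\nabla f|$ on the boundary gives $\int_{f^{-1}(t)} |\nabla f| \, d\mathcal{H}^{n-1} = -\int_{\{f>t\}} \Delta f \, d\sigma = \lambda \int_{\{f>t\}} f \, d\sigma$, the last step using $\Delta f = -\lambda f$. Hence the key identity
$$ \psi(t) = \frac{\lambda}{Z} \int_{\{f > t\}} f \, d\sigma $$
holds for almost every $t$, and its right-hand side is continuous in $t$: since a non-constant eigenfunction has level sets of zero Riemannian volume (by unique continuation, or by the analyticity of $f$ when the metric is analytic), the distribution $\nu := f_* \sigma$ of $f$ under the volume measure has no atoms, so $\int_{\{f>t\}} f \, d\sigma = \int_{(t, \infty)} s \, d\nu(s)$ depends continuously on $t$. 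This continuous representative is then the density $\psi$ throughout $(\inf f, \sup f)$.

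The monotonicity is now read off directly. For $t_1 < t_2$ the identity gives
$$ \psi(t_2) - \psi(t_1) = -\frac{\lambda}{Z} \int_{(t_1, t_2]} s \, d\nu(s). $$
On $(t_1, t_2]$ the weight $-s$ is nonnegative when $t_2 \le 0$ and nonpositive when $t_1 \ge 0$, so $\psi$ is non-decreasing on $(-\infty, 0]$ and non-increasing on $[0, \infty)$, i.e., unimodal with a maximum at the origin. For strict monotonicity I would invoke that $M$ is connected: for any nonempty subinterval $(a,b) \subseteq (\inf f, \sup f)$ the set $f^{-1}((a,b))$ is open and nonempty, hence of positive Riemannian volume, so $\nu$ assigns positive mass to $(a,b)$; feeding this into the displayed difference yields strict inequalities on $[\inf f, 0]$ and on $[0, \sup f]$. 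Finally, the statement presupposes $0 \in (\inf f, \sup f)$, which holds because $\int_M f \, d\sigma = -\lambda^{-1} \int_M \Delta f \, d\sigma = 0$, forcing the non-constant function $f$ to take both signs.

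I expect the only real difficulty to be the regularity bookkeeping rather than the computation. The level sets $f^{-1}(t)$ may be singular at critical values and $\nabla f$ may vanish along them, so the coarea formula and the divergence theorem must be applied only over the full-measure set of regular values furnished by Sard's theorem, after which the identity for $\psi$ is established almost everywhere and then extended to every $t \in (\inf f, \sup f)$ by continuity of $\tfrac{\lambda}{Z} \int_{\{f>t\}} f \, d\sigma$. The other input to handle with care is that level sets have measure zero, which is precisely what makes $\nu$ atomless and $\psi$ continuous; this is a standard consequence of unique continuation for eigenfunctions.
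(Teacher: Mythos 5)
Your proof is correct, and the heart of it --- the divergence theorem applied with $V=\nabla f$ plus the sign information in $\Delta f=-\lambda f$, fed through the coarea identity $\psi(t)=\frac{1}{Z}\int_{f^{-1}(t)}|\nabla f|$ --- is the same mechanism the paper uses (there the divergence theorem is applied on the slab $\{t_1<f<t_2\}$ rather than on a single superlevel set, which is an immaterial difference). Where you genuinely diverge is in the treatment of continuity, which is the only technically delicate part of the paper's argument. The paper proves directly that the pointwise-defined function $t\mapsto\int_{Z_t}|\nabla f|$ is continuous at \emph{every} $t$, including critical values: this requires the uniform bound $\sup_t \mathrm{Vol}_{n-1}(Z_t)<\infty$ (via Aronszajn's unique continuation and a Taylor-polynomial/Lagrange-interpolation argument) together with an approximation by cutoffs $\eta(|\nabla f|/\eps)|\nabla f|$ whose level integrals are differentiable by the first-variation formula. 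You instead establish the closed-form identity $\psi(t)=\frac{\lambda}{Z}\int_{\{f>t\}}f\,d\sigma$ for a.e.\ $t$ and observe that the right-hand side is automatically continuous once level sets have zero Riemannian volume, then take this continuous representative as the density. This is a legitimately simpler route: it needs only the soft fact that $f_*\sigma$ is atomless (which does follow from unique continuation as you indicate, and is much weaker than the paper's uniform $(n-1)$-volume bound), and it yields the monotonicity and the identification of $\psi'$ in one stroke. What it does not give you --- and what the paper's Lemma 3.2 does --- is the everywhere-continuity of the concretely defined surface integral $\int_{Z_t}|\nabla f|$ itself, a statement the paper reuses later (e.g.\ in the proofs of Proposition 5.1 and Theorem 1.3); for the theorem as stated, your version suffices.
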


An attempt at a physical interpretation of Theorem \ref{thm_1033}  goes roughly as follows: If $M$ is a  membrane vibrating according to the wave $f$ at a certain frequency, and
the area of the membrane is weighted by the local  energy $|\nabla f|^2$, then the contribution of a single level set  to the
total  energy of the wave  decreases as the level set gets further from the nodal set.

\medskip In the example described in  (\ref{eq_1032}), the density $\psi$ from Theorem \ref{thm_1033}
equals $(2/\pi) \cdot \sqrt{1-t^2}$ for $t \in [-1,1]$, which is indeed a continuous, unimodal function with a maximum at the origin.
%Has Theorem \ref{thm_1033} been observed before?
Once stated, Theorem \ref{thm_1033} is almost trivial to prove, see Section \ref{sec3} below. Arguments similar to the proof of this theorem appear for example in Sogge and Zelditch \cite{SZ}. We were led to the formulation of Theorem \ref{thm_1033} by viewing zero sets of eigenfunctions as {\it weighted minimal surfaces} in the sense that we are about to describe.

\medskip Let $f$ be a function satisfying $\Delta f = -\lambda f$ in the manifold $M$. What optimization  problem is solved by its nodal set $Z = \{ x \in M ; f(x) = 0 \}$? The nodal set $Z$ is a smooth, $(n-1)$-dimensional submanifold outside a singular set of lower dimension, see Cheng \cite{cheng}.
We denote $\rho = |\nabla f|$ and treat $\rho$ as a weight function on the manifold $M$. We observe that the nodal  hypersurface  $Z$ is a critical point of the  weighted area functional
\begin{equation} N \mapsto \int_N \rho, \label{eq_1041} \end{equation}
where $N \subseteq M$ is an $(n-1)$-dimensional hypersurface in $M$ and the integral in (\ref{eq_1041}) is carried out with respect to the $(n-1)$-dimensional Hausdorff measure.
This is explained in detail in Section \ref{sec2}.
Critical points of the area functional in $\RR^3$ are minimal surfaces. Hence,
we say that the nodal set of any non-constant eigenfunction is a {\it weighted minimal hypersurface}. This suggests a connection between the study of nodal sets of Laplace eigenfunctions, and the theory of minimal surfaces and isoperimetry.

\medskip Furthermore,
let $t \in (\inf f, \sup f)$ be a value of the function $f$. Then the sublevel set $\{ f \leq t \}$ is a critical point of the functional
\begin{equation} \Omega \mapsto \left( \int_{\partial \Omega} \rho \right) - \lambda t \sigma(\Omega), \label{eq_146} \end{equation}
defined for domains $\Omega$ with a smooth boundary in $M$.  In other words, up to regularity issues each level set of $f$ has a  constant weighted mean curvature (``weighted CMC'') with respect to the weight $\rho = |\nabla f|$, see Section \ref{sec2} for definitions.
 The term  $\lambda t \sigma(\Omega)$ in (\ref{eq_146}) is a Lagrange multiplier. It corresponds  to the isoperimetric optimization problem, where one minimizes $\int_{\partial \Omega} \rho$ among all domains $\Omega$ of a fixed Riemannian volume measure $\sigma(\Omega) = \sigma (\{ f \leq t \})$.
It would be interesting to find geometric conditions guaranteeing that the sublevel sets of $f$ are not only critical points of the functional in (\ref{eq_146}), but are in fact global minimizers, the optimal solutions of this isoperimetric problem.

\medskip Recall that the isoperimetric profile of a compact, $n$-dimensional Riemannian manifold $M$ is
\begin{equation}  I(t) = \inf \{ Vol_{n-1}(\partial \Omega) \, ; \, Vol_n(\Omega) = t \} \qquad \qquad \qquad (0 < t < Vol_n(M)), \label{eq_826} \end{equation}
where $Vol_{n-1}$ is $(n-1)$-dimensional volume.
The minimizer $\Omega$ of the optimization problem in (\ref{eq_826}) has a boundary $\partial \Omega$ of constant mean curvature, up to regularity issues. Moreover, under curvature assumptions such as non-negativity of the Ricci tensor, the function $I$ is known to be concave and in particular unimodal. This was shown by Bavard and Pansu \cite{BP}, see the appendix of Milman \cite{milman} for a survey of related results.
The value distribution density $\psi$ of a Laplace eigenfunction with respect to $\mu$ is analogous to the isoperimetric profile, and according to Theorem \ref{thm_1033} the function $\psi$ is unimodal. By continuing the analogy, one is tempted to conjecture  that $\psi$ has certain concavity properties under some geometric assumptions on the Riemannian manifold $M$.

\medskip In Section \ref{sec4} we observe that Theorem \ref{thm_1033} admits a straightforward  generalization to the case of weighted manifolds, see Theorem \ref{thm_540} below. We move on to disucss manifolds with boundary and eigenfunctions satisfying either the Dirichlet boundary conditions or the Neumann boundary conditions.

\begin{theorem} Let $M$ be a connected, $n$-dimensional, Riemannian manifold with a
smooth $(n-1)$-dimensional boundary $\partial M$. Assume that $M \cup \partial M$ is compact.
Let $f: M \rightarrow \RR$ be a non-constant eigenfunction of the Laplacian, smooth up to the boundary.
	Consider the measure $\mu$ whose density with respect to the Riemannian volume measure is the function $|\nabla f|^2$. Assume one of the following:
	\begin{enumerate}
		\item[(i)] The case of Dirichlet boundary conditions:  The function $f$ vanishes on $\partial M$.
		\item[(ii)] The case of Neumann boundary conditions: Write $N$ for the outer unit normal of $\partial M$. Then $\langle \nabla f, N \rangle = 0$ on the boundary.
	\end{enumerate}	
	Then in each of these two cases,
	the value distribution density of the function $f$ under $\mu$ is strictly-increasing in $[\inf f, 0]$ and strictly-decreasing in $[0, \sup f]$.  \label{thm_952}
\end{theorem}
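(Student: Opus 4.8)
The plan is to run the computation that proves Theorem~\ref{thm_1033}, keeping track of the boundary terms that now appear on $\partial M$. Fix a smooth, compactly supported test function $g : \RR \to \RR$. Since $\nabla(g \circ f) = (g' \circ f)\, \nabla f$, Green's identity on $M$ gives
\begin{equation}
\int_M (g' \circ f)\, |\nabla f|^2 \, d\sigma = -\int_M (g \circ f)\, \Delta f \, d\sigma + \int_{\partial M} (g \circ f)\, \langle \nabla f, N \rangle \, d\mathcal{H}^{n-1}.
\label{eq_green_bdry}
\end{equation}
Here $\Delta f = -\lambda f$ with $\lambda > 0$; positivity of $\lambda$ holds because $f$ is non-constant and the identity $\int_M |\nabla f|^2 = \lambda \int_M f^2$, whose boundary term vanishes under either condition, forces $\lambda > 0$. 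Thus the first term on the right equals $\lambda \int_M (g \circ f)\, f \, d\sigma$. Both this term and the left-hand side are expressed through the value distribution by the coarea formula: setting $w(t) = \int_{\{f = t\}} |\nabla f|^{-1} \, d\mathcal{H}^{n-1}$, which is finite and strictly positive for almost every $t \in (\inf f, \sup f)$, one has $\int_M (g' \circ f)\, |\nabla f|^2 \, d\sigma = \int_\RR g'(t)\, \psi(t)\, dt$ and $\lambda \int_M (g \circ f)\, f \, d\sigma = \lambda \int_\RR g(t)\, t\, w(t)\, dt$.

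Next I would evaluate the boundary integral in (\ref{eq_green_bdry}) in each case. In the Neumann case (ii) the integrand vanishes identically because $\langle \nabla f, N \rangle = 0$ on $\partial M$. In the Dirichlet case (i) the restriction $f|_{\partial M}$ is zero, so $(g \circ f)|_{\partial M} = g(0)$ and the boundary integral equals $g(0)\, C$, where $C = \int_{\partial M} \langle \nabla f, N \rangle \, d\mathcal{H}^{n-1} = -\lambda \int_M f \, d\sigma$ is a constant independent of $g$. Identity (\ref{eq_green_bdry}) thus becomes, in both cases,
\begin{equation}
\int_\RR g'(t)\, \psi(t)\, dt = \lambda \int_\RR g(t)\, t\, w(t)\, dt + C\, g(0),
\label{eq_distr_id}
\end{equation}
with $C = 0$ in the Neumann case. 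Since (\ref{eq_distr_id}) holds for every test function $g$ and $g(0)$ is the pairing of $g$ with the Dirac mass $\delta_0$, it identifies the distributional derivative $\psi' = -\lambda\, t\, w(t) - C\, \delta_0$.

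Away from the origin the atomic term disappears and $\psi'(t) = -\lambda\, t\, w(t)$, so $\psi$ agrees on each of $(\inf f, 0)$ and $(0, \sup f)$ with an absolutely continuous function, and $\psi(t_2) - \psi(t_1) = -\lambda \int_{t_1}^{t_2} s\, w(s)\, ds$. Because $\lambda > 0$ and $w > 0$ almost everywhere, this difference is strictly positive for $t_1 < t_2 \le 0$ and strictly negative for $0 \le t_1 < t_2$, giving the asserted strict monotonicity on $[\inf f, 0]$ and $[0, \sup f]$.

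The step requiring the most care --- and the reason why, unlike in Theorem~\ref{thm_1033}, continuity at the origin is \emph{not} claimed --- is the Dirichlet boundary term: the singular part $-C\,\delta_0$ encodes a possible jump of $\psi$ at $t = 0$ of size $C = -\lambda \int_M f \, d\sigma$, which need not vanish. In the Neumann case $C = 0$ and one recovers the continuous, unimodal profile of the closed-manifold setting. The remaining points are routine: one invokes Sard's theorem so that almost every $t$ is a regular value with $\{f = t\}$ a smooth hypersurface meeting $\partial M$ transversally, and one checks via the coarea formula that $w \in L^1(\RR)$ with $w(t) > 0$ for a.e.\ $t \in (\inf f, \sup f)$; neither presents an essential difficulty.
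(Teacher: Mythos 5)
Your argument is correct in substance and rests on the same identity as the paper's, but it packages that identity differently. The paper proves Theorem \ref{thm_952} as the case $\rho\equiv 1$ of Theorem \ref{thm_540}: there one applies the divergence theorem to $V=\nabla f$ directly on the region $M_{t_1,t_2}=\{t_1<f<t_2\}$ for regular values $t_1<t_2$ with $\sgn(t_1)=\sgn(t_2)$; in the Dirichlet case the equal-sign restriction forces the closure of $M_{t_1,t_2}$ to avoid $\partial M$ (where $f=0$), so no boundary flux ever appears, while in the Neumann case the flux through $\partial M$ vanishes. Your distributional version --- pairing against $g'\circ f$ and reading off $\psi'=-\lambda\, t\, w(t)-C\,\delta_0$ --- is a legitimate alternative, and it has the merit of exhibiting the Dirichlet-case discontinuity at $0$ as an explicit atom of mass $C=-\lambda\int_M f$, which the paper does not make quantitative. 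Two points deserve more care than ``routine.'' First, the step $\lambda\int_M (g\circ f)\,f\,d\sigma=\lambda\int_{\RR} g(t)\,t\,w(t)\,dt$ uses the coarea formula with the factor $|\nabla f|^{-1}$, which is valid only after one knows the critical set $\{\nabla f=0\}$ is Lebesgue-null; for a non-constant eigenfunction this holds, but it needs unique continuation (Aronszajn), the same input the paper invokes in Lemma \ref{lem_325}. You can sidestep it entirely by keeping that term as $-\lambda\int_{M_{t_1,t_2}}f\,d\sigma$, as the paper does, since positivity of this integral is immediate from connectedness. Second, your conclusion is that $\psi$ agrees \emph{almost everywhere} on each of $(\inf f,0)$ and $(0,\sup f)$ with a strictly monotone absolutely continuous function; to assert monotonicity of the actual pointwise function $t\mapsto\int_{Z_t}|\nabla f|$ at every value, including critical ones, you still need its continuity away from $0$, which is precisely what the analogue of Lemma \ref{lem_325} together with Corollary \ref{cor_831} supplies. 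Neither issue is fatal, but both are exactly the regularity matters that Section 3 of the paper is built to handle, so they should be addressed explicitly rather than deferred.
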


As an example, let us apply Theorem \ref{thm_952} in the case of the eigenfunction corresponding to the first eigenvalue of the   Laplacian with Dirichlet boundary conditions. Up to normalization, this eigenfunction is positive in $M$, and thus its value distribution density is supported on $[0, \sup f]$ and it is strictly-decreasing there. Of course, in this case the value distribution density is discontinuous at $0$.

\medskip A well-known result in the theory of minimal surfaces is the monotonicity formula (see e.g. Simon \cite[Chapter 4]{simon}.
In the particular case of complex submanifolds, the monotonicity formula goes back to
Lelong \cite{Lelong} and Rutishauser \cite{Ruti}). The following theorem provides a certain  monotonicity formula  for weighted minimal hypersurfaces such as level sets of $k$-homogeneous, harmonic functions in $\RR^n$. Write $B(x,r) = \{ y \in \RR^n \, ; \, |x-y| < r \}$ for $x \in \RR^n$ and $r > 0$.

\begin{theorem} Let $P: \RR^n \rightarrow \RR$ be a solid spherical harmonic of degree $k \geq 1$, that is, a $k$-homogeneous polynomial in $\RR^n$ with $\Delta P = 0$. Fix $t \in \RR$ and consider the
level set 	$M = \{ x \in \RR^n \, ; \, P(x) = t \}$. Then the function
\begin{equation}  r \mapsto \frac{1}{r^{n+k-2}} \int_{M \cap B(0, r)} |\nabla P| \qquad \qquad (r > 0) \label{eq_601} \end{equation}
is non-decreasing, where the integral is carried out with respect to the $(n-1)$-dimensional Hausdorff measure.
\label{thm_602}
\end{theorem}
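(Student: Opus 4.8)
The plan is to prove a weighted version of the classical monotonicity formula for minimal surfaces, using the radial vector field $X(x) = x$. The starting point is the observation that harmonicity of $P$ makes \emph{every} level set weighted minimal for the weight $\rho := |\nabla P|$, not just the nodal set. Writing $\nu = \nabla P / \rho$ for the unit normal of $M$ and $H = \div_M \nu$ for its scalar mean curvature, a direct computation using $\Delta P = 0$ gives
\[ H = \div\!\left( \frac{\nabla P}{\rho} \right) = \frac{\Delta P}{\rho} - \frac{\langle \nabla P, \nabla \rho\rangle}{\rho^2} = - \frac{\langle \nu, \nabla \rho\rangle}{\rho}, \]
so that $\rho H + \langle \nabla \rho, \nu\rangle = 0$ on the smooth part of $M$. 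This is precisely the vanishing of the weighted mean curvature from Section \ref{sec2}; it is the term that the Lagrange multiplier in (\ref{eq_146}) would otherwise spoil, but here $\lambda = 0$ and it survives for \emph{all} levels $t$.

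Next I would run the divergence-theorem computation. Set $D(r) = \int_{M \cap B(0,r)} \rho \, d\mathcal H^{n-1}$ and apply the tangential divergence theorem on $M \cap B(0,r)$ to the field $Y = \rho\, x$. Using $\div_M(\rho\, x) = \rho\, \div_M x + \langle \nabla_M \rho, x\rangle$, the fact that $\div_M x = n-1$, and Euler's identity $\langle \nabla \rho, x\rangle = (k-1)\rho$ (since $\rho$ is $(k-1)$-homogeneous), one gets
\[ \div_M(\rho\, x) = (n+k-2)\rho - \langle \nabla \rho, \nu\rangle\, \langle x, \nu\rangle. \]
The divergence theorem on $M \cap B(0,r)$, whose boundary is $M \cap \partial B(0,r)$ with outer conormal $\eta$, produces a mean-curvature term $\int \rho H \langle x,\nu\rangle$ which by weighted minimality equals $-\int \langle \nabla\rho,\nu\rangle\langle x,\nu\rangle$ and cancels the normal term above exactly. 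One is left with the clean identity
\[ (n+k-2)\, D(r) = \int_{M \cap \partial B(0,r)} \rho\, \langle x, \eta\rangle \, d\mathcal H^{n-2}, \]
and a short computation identifies $\langle x, \eta\rangle = r\,\bigl| \nabla_M |x| \bigr| = r\sqrt{1 - c^2}$ on $M \cap \partial B(0,r)$, where $c := \langle x/|x|, \nu\rangle$.

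To finish I would compare this with $D'(r)$. The coarea formula applied to $x \mapsto |x|$ restricted to $M$ gives $D'(r) = \int_{M\cap\partial B(0,r)} \rho / \bigl| \nabla_M |x| \bigr| \, d\mathcal H^{n-2}$, so the two expressions read
\[ (n+k-2)\, D(r) = r\!\int_{M\cap\partial B(0,r)}\!\! \rho \sqrt{1-c^2}, \qquad r D'(r) = r\!\int_{M\cap\partial B(0,r)}\!\! \frac{\rho}{\sqrt{1-c^2}}. \]
Since $\rho \geq 0$ and $1/\sqrt{1-c^2} \geq \sqrt{1-c^2}$ pointwise, we get $rD'(r) \geq (n+k-2)D(r)$, which is exactly $\frac{d}{dr}\bigl[r^{-(n+k-2)} D(r)\bigr] \geq 0$. (When $t = 0$ the level set is the harmonic cone and $\langle x,\nu\rangle \equiv 0$ by Euler's identity, so $c \equiv 0$, the inequality becomes equality, and the quantity is constant — consistent with its scale-invariance in the homogeneous case.)

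The main obstacle is regularity, but homogeneity tames it. By Euler's identity any critical point satisfies $kP(x) = \langle x, \nabla P(x)\rangle = 0$, so the critical set $\Sigma = \{\nabla P = 0\}$ lies in the nodal set $\{P=0\}$. Consequently, for $t \neq 0$ the level set $M$ contains no critical points and is a genuine smooth hypersurface — a real algebraic variety of locally finite $(n-1)$-dimensional measure; by Sard's theorem $\partial B(0,r)$ is transverse to $M$ for almost every $r$, which is all the coarea formula and the divergence theorem require, and the resulting a.e. differential inequality upgrades to monotonicity of the continuous function $r \mapsto r^{-(n+k-2)}D(r)$. The one case genuinely needing care is $t=0$, where $M$ is singular along $\Sigma$ (of dimension at most $n-2$); there I would apply the divergence theorem on $M \cap B(0,r)$ with a $\delta$-neighborhood $U_\delta$ of $\Sigma$ removed and let $\delta \to 0$, the boundary contributions along $\partial U_\delta$ vanishing because $\rho$ is continuous and vanishes on $\Sigma$ while $\Sigma$ has codimension at least two. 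Making these limiting boundary terms rigorous is the only delicate technical point, and the real-analytic structure of the polynomial $P$ is what makes it tractable.
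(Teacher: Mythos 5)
Your proposal is correct and follows essentially the same route as the paper: weighted minimality of every level set of a harmonic function for the weight $\rho = |\nabla P|$, the tangential divergence theorem applied to the radial field on $M \cap B(0,r)$, and the coarea formula for $|x|$ restricted to $M$, with your pointwise comparison $1/\sqrt{1-c^2} \geq \sqrt{1-c^2}$ being exactly the paper's identification of the non-negative defect term $k^2 t^2 \, \frac{d}{dr}\int_{M_r} \bigl(|x|^2 |\nabla P|\bigr)^{-1}$. Your observation that Euler's identity forces every non-zero value of $P$ to be regular slightly streamlines the paper's treatment (which handles non-regular non-zero values by a separate continuity-and-density argument), and since the $t=0$ case follows from homogeneity alone, the excision argument you sketch for it is unnecessary.
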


In the case where $t = 0$ in Theorem \ref{thm_602}, by homogeneity the function  in (\ref{eq_601}) is constant in $r$. This constant is finite and non-zero.
This explains the choice of the exponent $n+k-2$ in (\ref{eq_601}).
Theorem \ref{thm_602} is proven in Section \ref{sec5}. See Corollary \ref{cor_929} below for a reformulation of Theorem \ref{thm_602} in terms of spherical harmonics on the unit sphere $S^{n-1} = \{ x \in \RR^n \, ; \, |x| = 1 \}$.
Theorem \ref{thm_602} implies the following corollary:

\begin{corollary} Let $P: \RR^n \rightarrow \RR$ be a solid spherical harmonic of a non-zero degree. For $t \in \RR$ set $Z_t = \{ x \in \RR^n \, ; \, P(x) = t \}$. Then the function
	\begin{equation}  t \mapsto \int_{Z_t \cap B(0,1)} |\nabla P| \qquad \qquad \qquad (t \in \RR) \label{eq_852} \end{equation}
	is unimodal: It is non-decreasing in $(-\infty,0]$ and non-increasing in $[0, \infty)$.
	
	\medskip Equivalently, the value distribution density of $P$ under the measure $\mu$ with density $|\nabla P|^2$ in the ball $B(0,1)$, 	
	is unimodal with a maximum at the origin. \label{cor_834}
\end{corollary}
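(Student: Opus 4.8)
The plan is to deduce both assertions from the monotonicity formula of Theorem \ref{thm_602} together with the scaling symmetry of the homogeneous polynomial $P$. First I would record that the two formulations are literally the same object. By the coarea formula, applied with the test integrand $x \mapsto g(P(x))|\nabla P(x)|$, one has for every bounded measurable $g$
\[ \int_{B(0,1)} g(P(x)) \, |\nabla P(x)|^2 \, dx = \int_{\RR} g(t) \left( \int_{Z_t \cap B(0,1)} |\nabla P| \, d\mathcal{H}^{n-1} \right) dt, \]
so the value distribution density of $P$ under $\mu$ equals, up to the normalizing constant $\mu(B(0,1))$, the function $\psi(t) = \int_{Z_t \cap B(0,1)} |\nabla P| \, d\mathcal{H}^{n-1}$ of (\ref{eq_852}). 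Thus the two parts of the corollary are equivalent, and it suffices to prove that $\psi$ is non-decreasing on $(-\infty,0]$ and non-increasing on $[0,\infty)$. For $t \in \RR$ and $r > 0$ I write $F_t(r) = r^{-(n+k-2)} \int_{Z_t \cap B(0,r)} |\nabla P| \, d\mathcal{H}^{n-1}$ for the quantity of Theorem \ref{thm_602}, so that $\psi(t) = F_t(1)$.

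The key step is a scaling identity. Since $P$ is $k$-homogeneous, the dilation $x \mapsto \lambda x$ carries $Z_t$ onto $Z_{\lambda^k t}$ and $B(0,r)$ onto $B(0,\lambda r)$; moreover $\nabla P$ is $(k-1)$-homogeneous, so $|\nabla P(\lambda x)| = \lambda^{k-1}|\nabla P(x)|$, while this dilation multiplies $(n-1)$-dimensional Hausdorff measure on a hypersurface by $\lambda^{n-1}$. Changing variables I would obtain
\[ \int_{Z_{\lambda^k t} \cap B(0,\lambda r)} |\nabla P| \, d\mathcal{H}^{n-1} = \lambda^{n+k-2} \int_{Z_t \cap B(0,r)} |\nabla P| \, d\mathcal{H}^{n-1}, \]
which, after dividing by $(\lambda r)^{n+k-2}$, rearranges to the clean identity $F_{\lambda^k t}(\lambda r) = F_t(r)$ for all $\lambda, r > 0$. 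Specializing $\lambda = 1/r$ gives $F_t(r) = \psi(t/r^k)$.

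Now fix $s,t$ with $0 < s < t$ and set $\lambda = (s/t)^{1/k} \in (0,1)$, so that $\lambda^k t = s$. The identity at radius $1$ reads $\psi(t) = F_t(1) = F_s(\lambda)$, and since $\lambda < 1$ the monotonicity of $r \mapsto F_s(r)$ from Theorem \ref{thm_602} yields $\psi(t) = F_s(\lambda) \le F_s(1) = \psi(s)$. Hence $\psi$ is non-increasing on $(0,\infty)$. The identical computation with $t < s < 0$, again with $\lambda = (s/t)^{1/k} \in (0,1)$, shows that $\psi$ is non-decreasing on $(-\infty,0)$.

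It remains to place the maximum at the origin, that is, to establish $\psi(0) \ge \psi(t)$ for all $t$; given the monotonicity on each half-line, this reduces to the continuity $\lim_{s \to 0} \psi(s) = \psi(0)$, and this I expect to be the main obstacle. The natural approach is to pass to the limit in the level sets: off the singular cone $S = \{P = 0, \, \nabla P = 0\}$, which has Hausdorff dimension at most $n-2$ by the real-analytic structure of nodal sets, the surfaces $Z_s$ converge to $Z_0$ in $C^1_{loc}$ and $|\nabla P| > 0$, so the weighted areas there converge as $s \to 0$; near $S$ the continuous weight $|\nabla P|$ vanishes, so that region should contribute negligibly, \emph{provided} one controls the local weighted area of $Z_s$ near $S$ uniformly in $s$. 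Supplying this uniform control is the delicate point; the monotonicity formula is precisely the tool for it, since $F_s(r) = \psi(s/r^k)$ bounds weighted areas at every scale, and heuristically $Z_0$ is the tangent cone at infinity of each $Z_t$, whose weighted density $F_0 \equiv \psi(0)$ agrees with $\lim_{r \to \infty} F_t(r) = \psi(0^{\pm})$. Once the cut-off near $S$ is made rigorous and $\psi(0) = \lim_{s \to 0^{\pm}} \psi(s)$ is in hand, the two monotonicity statements give $\psi(0) \ge \psi(t)$ for every $t$, completing the proof.
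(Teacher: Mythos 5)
Your core argument is exactly the paper's: the dilation identity $F_t(r)=\psi(t/r^k)$ is precisely what the paper extracts in equation (\ref{eq_425}) (there written explicitly in terms of the spherical harmonic $p=P|_{S^{n-1}}$, so that the right-hand side is visibly a function of $t/r^k$ alone), and the deduction ``non-decreasing in $r$ for fixed $t$, hence non-increasing in $t>0$ at $r=1$'' is verbatim the paper's, as is the coarea-formula identification of $\psi$ with the value distribution density. So the monotonicity on the two open half-lines and the equivalence of the two formulations are correct and obtained by the same route.

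The one place you diverge is the endpoint $t=0$, and there you have made the problem harder than it needs to be, and left it open. You are right that something must be said: monotonicity on $(0,\infty)$ and $(-\infty,0)$ alone does not give $\psi(0)\ge\lim_{s\to 0^{\pm}}\psi(s)$, and weighted areas are a priori only \emph{lower} semicontinuous under convergence of level sets, which is the wrong inequality. But the needed continuity of $t\mapsto\int_{\{P=t\}\cap B(0,r)}|\nabla P|$ is already established in the paper, inside the proof of Theorem \ref{thm_602}, by adapting the argument of Lemma \ref{lem_325}: the $(n-1)$-volumes of the level sets of the degree-$k$ polynomial $P$ inside $B(0,r)$ are bounded uniformly in $t$ (a line meets $\{P=t\}$ in at most $k$ points), and the truncated functions $t\mapsto\int_{Z_t\cap B(0,r)}\eta(|\nabla P|/\eps)\,|\nabla P|$ are continuous in $t$ by the first-variation formula (Proposition \ref{lem_1001}) and converge to $\psi$ uniformly with error $O(\eps)$. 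This settles $t=0$ in two lines. Your proposed blow-up analysis near the singular set $S$ — the $C^1_{loc}$ convergence of $Z_s$ to $Z_0$ away from $S$ together with a uniform weighted-area bound near $S$ — is both unnecessary and, as you acknowledge, not actually carried out; as written it is the one genuine gap in your proof. With the continuity statement imported from Lemma \ref{lem_325} / Theorem \ref{thm_602}, your argument closes and coincides with the paper's.
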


Corollary \ref{cor_834} shows that we may replace the boundary conditions from Theorem \ref{thm_952} by homogeneity assumptions on the Laplace eigenfunction.
The monotonicity formula of Theorem \ref{thm_602} reminds us of the recent solution  of the Nadirashvili conjecture by Logunov \cite{lugnov}. According to \cite[Theorem 1.2]{lugnov}, for any harmonic function $f: \RR^n \rightarrow \RR$ and for any point $x_0$ with $f(x_0) = 0$,
\begin{equation}  Vol_{n-1}(M \cap B(x_0,r)) \geq c_n r^{n-1} \qquad \qquad (r > 0), \label{eq_256} \end{equation}
where $c_n > 0$ depends solely on the dimension $n$. In view of (\ref{eq_256}) and Theorem \ref{thm_602}, we may wonder whether a certain exact monotonicity formula holds true also for nodal sets of general harmonic functions in $\RR^n$. Perhaps numerical simulations can shed light here.
%and help in formulating a conjectural, exact monotonicity formula, in the context  of nodal sets of arbitrary harmonic functions in $\RR^3$.

\medskip Throughout this note, we write
$\log$ for the natural logarithm,
$\langle \cdot, \cdot \rangle$ for the Riemannian scalar product in the tangent space $T_x M$, and $\nabla$ is the covariant derivative (Levi-Civita). We write $\Delta f$ for the Laplacian of $f$, and $\nabla^2 f$ is the Hessian of $f$ viewed as a $2$-covariant tensor, i.e., a bilinear form on the tangent space. We denote by $| \cdot |$  the usual Euclidean norm in $\RR^n$, and also $|v| = \sqrt{\langle v, v \rangle}$ for a tangent vector $v$ in a Riemannian manifold.
A hypersurface is a submanifold of codimension one.
A smooth function or hypersurface are $C^{\infty}$-smooth. Unless stated otherwise, by a manifold we mean a $C^{\infty}$-smooth manifold.
 We abbreviate $\{ f  \leq t \} := \{ x \, ; \, f(x) \leq t \}$ and $f(A) = \{ f(x) \, ; \, x \in A \}$.

\medskip {\it Acknowledgement}. I would like to thank Emanuel Milman for proposing an idea that has led to   simplification of the proof of Theorem \ref{thm_602} and Corollary \ref{cor_834}. I am grateful to  Sasha Logunov for  vivid explanations  on harmonic analysis, and  to David Jerison, Misha Sodin and Steve Zelditch for their interest
and for their remarks on an earlier version of this text.

\section{Nodal sets as weighted minimal hypersurfaces}
\label{sec2}

Let $(M, g, \rho)$ be an $n$-dimensional, weighted, Riemannian manifold. This means that $g$ is a Riemannian metric on the smooth manifold $M$, and that $\rho: M \rightarrow \RR$ is a smooth function
referred to as {\it weight}. Given a smooth hypersurface $Z \subseteq M$ and a unit normal $N$ to the hypersurface $Z$, we set
$$ H_{\rho} = \rho H - \langle \nabla \rho, N \rangle $$
where $H(x)$ is the usual mean curvature of the hypersurface $Z$ at the point $x$ with respect to the normal $N$,
the trace of the second fundamental form, i.e.,
$$ H(x) = -\Tr \left[ T_x Z \ni u \mapsto \nabla_u N \right]. $$
The quantity $H_{\rho}(x)$ is referred to as the {\it weighted mean curvature} of $Z$ at the point $x$ with respect to the normal $N$.
Note that replacing the normal $N$ by $-N$ reverses the sign of the weighted mean curvature. See Gromov \cite[Section 9.4.E]{Gr}, Morgan \cite{morgan} and references therein for background on the weighted mean curvature, in various normalizations. The advantage of our normalization of $H_{\rho}$ is the linear dependence on $\rho$, which allows the consideration of weights that are
not necessarily positive.

\medskip As is discussed in \cite{Gr, morgan}, the weighted mean curvature
is related to the first variation of weighted area. If the weighted mean curvature $H_{\rho}$ vanishes on a smooth hypersurface $Z \subseteq M$, then $Z$ is a critical point for the weighted area functional
$$ N \mapsto \int_N \rho $$
defined for smooth hypersurfaces $N \subseteq M$. This is a particular case of the next lemma.
%, in which we consider a smooth variation of hypersurfaces $(Z_t)_{t \in \RR}$ in $M$.

%For the sake of simplicity, we restrict attention to the  case where the different hypersurfaces $Z_t$ are different level sets of a single smooth function $f: M \rightarrow \RR$.

\begin{lemma}
	Let $(M, g)$ be a  Riemannian manifold, let $U \subseteq M$ be an open set, and let $\rho: M \rightarrow \RR$ be
a smooth function, compactly-supported in $U$. Let $Z \subseteq M$ be such that $Z \cap U$ is a smooth hypersurface.
Let $V$ be a smooth vector field, compactly-supported in $U$, consider the flow $(\vphi_t)_{t \in \RR}$ that is induced by the vector field $V$,
and set $Z_t = \vphi_t(Z)$ for $t \in \RR$. Then,
	$$  \left. \frac{d}{dt} \int_{Z_t} \rho \right|_{t=0} = -\int_{Z \cap U} \langle N, V \rangle \cdot H_{\rho}, $$
	where all integrals are carried out with respect to the $(n-1)$-dimensional Hausdorff measure in $M$, and where $N$ is a unit normal to $Z$ with respect to which the weighted mean curvature $H_{\rho}$ is computed. \label{lem_1001_} 	
\end{lemma}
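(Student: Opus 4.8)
The plan is to establish the usual first variation of weighted area by pulling everything back to the fixed hypersurface $Z$, and then to integrate by parts on $Z$ so as to isolate the normal component of $V$. First I would use the diffeomorphisms $\vphi_t$ to reparametrize: writing $J_t$ for the Jacobian of $\vphi_t|_Z : Z \to Z_t$ relating the $(n-1)$-dimensional area elements, one has
$$ \int_{Z_t} \rho = \int_{Z \cap U} (\rho \circ \vphi_t) \, J_t, $$
where the integrand is compactly supported in $Z \cap U$ because $\rho$ is. Since $\vphi_0 = \id$ and $J_0 \equiv 1$, differentiating under the integral sign at $t = 0$ gives
$$ \left. \frac{d}{dt} \int_{Z_t} \rho \right|_{t=0} = \int_{Z \cap U} \Big[ \langle \nabla \rho, V \rangle + \rho \, \div_Z V \Big], $$
where $\div_Z$ denotes the tangential divergence along $Z$ and I have used $\frac{d}{dt}|_{t=0}(\rho \circ \vphi_t) = \langle \nabla \rho, V \rangle$.

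The technical heart of the argument is the first-variation-of-area identity $\frac{d}{dt}|_{t=0} J_t = \div_Z V$. I would prove it by choosing local coordinates on $Z$ and writing $J_t = \sqrt{\det g_t} / \sqrt{\det g_0}$ for the pulled-back induced metrics $g_t = (\vphi_t^* g)|_{TZ}$. Jacobi's formula $\frac{d}{dt} \det g_t = \det g_t \cdot \tr(g_t^{-1} \dot g_t)$ together with $\dot g_0(u,w) = \langle \nabla_u V, w \rangle + \langle \nabla_w V, u \rangle$ shows that the trace equals $2 \div_Z V$ in an orthonormal frame $e_1, \ldots, e_{n-1}$ of $TZ$; the factor $1/2$ produced by differentiating the square root then yields $\frac{d}{dt}|_{t=0} J_t = \div_Z V$, as needed.

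It remains to split $V = V^T + \langle V, N \rangle N$ along $Z$ into its tangential and normal parts. For the tangential part, the product rule gives $\langle \nabla \rho, V^T \rangle + \rho \, \div_Z V^T = \div_Z(\rho V^T)$, whose integral over $Z \cap U$ vanishes by the divergence theorem on $Z$, since $\rho V^T$ is compactly supported in $Z \cap U$. For the normal part, because $N$ has unit length the tangential gradient of $\langle V, N \rangle$ is orthogonal to $N$, so $\div_Z(\langle V, N \rangle N) = \langle V, N \rangle \, \div_Z N = -\langle V, N \rangle H$, the sign coming directly from the defining formula $H = -\tr[ u \mapsto \nabla_u N ]$. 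Collecting the surviving terms gives
$$ \int_{Z \cap U} \langle V, N \rangle \Big[ \langle \nabla \rho, N \rangle - \rho H \Big] = -\int_{Z \cap U} \langle N, V \rangle \cdot H_{\rho}, $$
which is the asserted formula. The only step demanding genuine care is the first-variation-of-area computation of the middle paragraph; everything else is bookkeeping with the tangential divergence and one application of the divergence theorem.
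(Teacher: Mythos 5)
Your proposal is correct and follows essentially the same route as the paper: pull back to $Z$ via the flow, compute $\frac{d}{dt}\big|_{t=0}$ of the area element through the Lie derivative $L_V g$ (your Jacobi-formula computation is the paper's $d\log\det = d\,\Tr$ step), decompose $V$ into tangential and normal parts, and kill the tangential contribution with the divergence theorem on $Z$. The only cosmetic difference is that you decompose $V$ after computing $\div_Z V$ rather than before taking the trace.
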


\begin{proof} Write $K$ for the closure of $\{ \rho \neq 0 \}$, a compact  contained in the open set $U$. The flow $(\vphi_t)_{t \in \RR}$ induced by the vector field $V$ satisfies
	\begin{equation} \frac{d \vphi_t(x)}{d t} = V(\vphi_t(x)) \label{eq_1023} \end{equation}
	and $\vphi_0(x) = x$. Thus $(\vphi_t: M \rightarrow M)_{t \in \RR}$ is a one-parameter group of smooth diffeomoprhisms.
Recall that $Z_t = \vphi_t(Z)$. By compactness, there exists $\delta > 0$ such that for $t \in (-\delta, \delta)$,
	\begin{equation} Z_t \cap K \subseteq \vphi_t(Z \cap U). \label{eq_415} \end{equation}
From now on, for $t \in (-\delta, \delta)$ we view $\vphi_t$ as a map whose domain is {\it only} the smooth hypersurface $Z \cap U$.
Hence $\vphi_t^* g$ is a Riemannian metric on $Z \cap U$ which is the pull-back of $g$ under $\vphi_t$.
	It follows from  (\ref{eq_415}) and from the change-of-variables $y = \vphi_t(x)$ that for any $t \in (-\delta, \delta)$,
	\begin{equation}
	\int_{Z_{t} } \rho
=
\int_{Z_{ t} \cap K} \rho
	= \int_{Z \cap U}
	 \rho(\vphi_t(x)) \sqrt{\det(\vphi_t^* g)^{\sharp}}
	\label{eq_423} \end{equation}
	where $(\vphi_t^* g)^{\sharp}  = A_t: T_x Z \rightarrow T_x Z$ is  defined via  $(\vphi_t^*g)(\alpha ,\beta) = \langle A_t \alpha, \beta \rangle$ for $\alpha, \beta \in T_x Z$.
	Differentiating (\ref{eq_423}) under the integral sign and using (\ref{eq_1023}) and the formula $d \log \det(A_t) / dt|_{t=0} = d \tr(A_t)/dt|_{t=0}$ we obtain
	\begin{equation}
	\left. \frac{d}{dt} \int_{Z_{ t}} \rho \right|_{t=0} = \int_{Z \cap U}  \left[ \left \langle \nabla \rho, V \right \rangle + \frac{\rho}{2}  \frac{d}{d t} \Tr( \vphi_t^* g)^{\sharp}|_{t=0} \right]. \label{eq_1151}
	\end{equation}
	By the definition of the Lie derivative, for any $\alpha,\beta \in T_x Z$,
	\begin{equation} \left. \frac{d}{d t}  \vphi_t^* g \right|_{t=0}(\alpha,\beta) = L_V g (\alpha, \beta)  = \langle \nabla_\alpha V, \beta \rangle + \langle \alpha, \nabla_{\beta} V \rangle, \label{eq_515} \end{equation}
	where the last passage is explained, e.g., in \cite[Section 2.62]{GHL}.
Write $\pi: T_x M \rightarrow T_x Z$ for the orthogonal projection operator, and let us decompose $V = V_N + V_T$ where $V_N = \langle V, N \rangle N$ and $V_T = \pi(V)$. Then by (\ref{eq_515}),
\begin{align*}   \frac{d}{d t} & \Tr( \vphi_t^* g)^{\sharp}|_{t=0}  = 2 \Tr \left[ T_x Z \ni u \mapsto \pi \nabla_u V \right]
\\ & = 2 \Tr \left[ T_x Z \ni u \mapsto \pi \nabla_u V_T \right] + 2 \langle V, N \rangle \Tr \left[ T_x Z_{t_0} \ni u \mapsto \nabla_u N \right]  = 2 {\rm div}_Z (V_T) - 2 \langle V, N \rangle H,
\end{align*}
where ${\rm div}_Z$ is the divergence operator on the smooth hypersurface $Z \cap U$. 
	From (\ref{eq_1151}) and the previous equation,
	$$
	\left. \frac{d}{dt} \int_{Z_{ t}} \rho \right|_{t=0} = \int_{Z\cap U}  \left[ {\rm div}_Z (\rho V_T) + \langle V, N \rangle \cdot \langle \nabla \rho, N \rangle -  
\langle V, N \rangle H \right] = -\int_{Z \cap U} \langle V, N \rangle \cdot H_{\rho}, $$
where the integral of ${\rm div}_Z (\rho V_T)$ vanishes by the divergence theorem, since $\rho V_T$ is a compactly-supported vector field in $Z \cap U$. 
This completes the proof.
\end{proof}

\begin{proposition}
	Let $(M, g)$ be a  Riemannian manifold and let $\rho, f: M \rightarrow \RR$ be smooth functions
	such that the closure of $\{ \rho \neq 0 \}$ is a compact contained in the open set $U = \{ |\nabla f| \neq 0 \}$. Abbreviate  $Z_t = \{ f =  t \}$.
	Then at any $t \in \RR$,
	\begin{equation}  \frac{d}{dt} \int_{Z_t} \rho  = -\int_{Z_{t} \cap U}  \frac{H_{\rho}}{|\nabla f|}, \label{eq_701} \end{equation}
	where all integrals are carried out with respect to the $(n-1)$-dimensional Hausdorff measure in $M$, and  the weighted mean curvature $H_{\rho}$ is computed with respect to the normal $\nabla f / |\nabla f|$. \label{lem_1001} 	
\end{proposition}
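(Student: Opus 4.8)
The plan is to deduce this level-set formula from Lemma \ref{lem_1001_} by choosing a vector field whose flow carries level sets of $f$ into one another at unit speed. Since $U = \{ |\nabla f| \neq 0 \}$, on $U$ the field $\nabla f / |\nabla f|^2$ is well-defined and smooth, and it satisfies $\langle \nabla f, \nabla f / |\nabla f|^2 \rangle = 1$; hence its flow raises the value of $f$ at unit rate. To obtain the globally-defined, compactly-supported field required by the lemma, I would fix a cutoff $\chi: M \rightarrow [0,1]$ equal to $1$ on a neighborhood of the compact set $K = \overline{\{ \rho \neq 0 \}}$ and supported in $U$, and set $V = \chi \cdot \nabla f / |\nabla f|^2$. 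Extended by zero, $V$ is a smooth compactly-supported vector field on $M$, so it generates a complete flow $(\varphi_s)_{s \in \RR}$.

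The first key step is to identify $\int_{\varphi_s(Z_t)} \rho$ with $\int_{Z_{t+s}} \rho$. On the region $\{ \chi = 1 \}$ one has $\frac{d}{ds} f(\varphi_s(x)) = \langle \nabla f, V \rangle = 1$, so for small $s$ the flow maps $Z_t$ into $Z_{t+s}$ there; running the flow backwards shows that, in a fixed neighborhood $\mathcal{N}$ of $K$, the sets $\varphi_s(Z_t)$ and $Z_{t+s}$ coincide. Since $\rho$ is supported in $\mathcal{N}$, this yields $\int_{\varphi_s(Z_t)} \rho = \int_{Z_{t+s}} \rho$ for small $s$, and therefore
\[ \frac{d}{dt} \int_{Z_t} \rho = \left. \frac{d}{ds} \int_{\varphi_s(Z_t)} \rho \right|_{s=0}. \]

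Now I would apply Lemma \ref{lem_1001_} with $Z = Z_t$, noting that $Z_t \cap U$ is a smooth hypersurface by the implicit function theorem since $\nabla f \neq 0$ on $U$, and with the unit normal $N = \nabla f / |\nabla f|$. The lemma gives $\left. \frac{d}{ds} \int_{\varphi_s(Z_t)} \rho \right|_{s=0} = -\int_{Z_t \cap U} \langle N, V \rangle H_{\rho}$. It remains only to evaluate the integrand: the weighted mean curvature $H_{\rho} = \rho H - \langle \nabla \rho, N \rangle$ is supported inside $K$, where $\chi = 1$ and hence $\langle N, V \rangle = \langle \nabla f / |\nabla f|, \nabla f / |\nabla f|^2 \rangle = 1/|\nabla f|$. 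Substituting this produces exactly $-\int_{Z_t \cap U} H_{\rho} / |\nabla f|$, as claimed.

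The step I expect to demand the most care is the identification $\int_{\varphi_s(Z_t)} \rho = \int_{Z_{t+s}} \rho$: one must argue, via a backward-flow and compactness argument, that near the support of $\rho$ the flowed-out surface agrees set-theoretically with the next level set on a neighborhood $\mathcal{N}$ that is uniform for small $s$, so that the discrepancy introduced by the cutoff, where $\chi < 1$ and the flow no longer shifts $f$ at unit speed, lies outside the support of the integrand and may be ignored. Everything else is a direct specialization of Lemma \ref{lem_1001_} together with the pointwise computation of $\langle N, V \rangle$.
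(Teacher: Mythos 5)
Your argument is correct and is essentially identical to the paper's own proof: both construct a compactly-supported vector field agreeing with $\nabla f / |\nabla f|^2$ near $K = \overline{\{\rho \neq 0\}}$, identify $\int_{\varphi_s(Z_t)} \rho$ with $\int_{Z_{t+s}} \rho$ using that the flow shifts $f$ at unit speed on a neighborhood of $K$, and then apply Lemma \ref{lem_1001_} together with the computation $\langle N, V \rangle = 1/|\nabla f|$. The only (inessential) difference is that the paper explicitly disposes of the trivial case $Z_{t} \cap U = \emptyset$ first.
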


\begin{proof}
 Write $K$ for the closure of $\{ \rho \neq 0 \}$, a compact  contained in the open set $U$.
Since $\rho$ is supported in the compact $K \subseteq U$, and $Z_{t} \cap U$ is either empty or a smooth hypersurface, then necessarily  $\int_{Z_{t}} |\rho| < \infty$ for all $t \in \RR$.
Fix $t_0 \in \RR$ and let us prove (\ref{eq_701}) for $t = t_0$. If $Z_{t_0} \cap U = \emptyset$, then (\ref{eq_701}) holds trivially as $\int_{Z_t} \rho$ vanishes for $t$ near $t_0$. From now on in this proof, assume that  $$ Z_{t_0} \cap U \neq \emptyset. $$
Thus $Z_{t_0} \cap U$ is a smooth hypersurface.   
 Fix a smooth vector field $V$ in $M$ that is compactly supported in  $U$ and that satisfies $V(x) = \nabla f / |\nabla f|^2$ in a neighborhood of $K$.
	Consider the one-parameter group of diffeomorphisms $(\vphi_s)_{s \in \RR}$ induced by the vector field $V$. For any $x$ in a neighborhood of $K$ and for any $s \in \RR$ that is sufficiently close to zero,
	$$ f(\vphi_s(x)) = f(x) + \int_0^s \frac{d}{dr} f(\vphi_r(x)) dr = \int_0^s \langle \nabla f, V \rangle  dr = f(x) + s, $$
	since $\langle \nabla f, V \rangle = 1$ in a neighborhood of $K$. 	Hence there exists $\delta > 0$ such that for $t \in (t_0 - \delta, t_0 + \delta)$,
$$ Z_{t + t_0} \cap K = \vphi_t(Z_{t_0} \cap U) \cap K = \vphi_t(Z_{t_0}) \cap K . $$
Therefore,
\begin{equation}  \int_{Z_{t + t_0}} \rho = \int_{Z_{t + t_0} \cap K} \rho = 
\int_{\vphi_t(Z_{t_0})} \rho. \label{eq_1146} \end{equation}
 We may now apply Lemma \ref{lem_1001_} with the vector field $V$ and with $Z = Z_{t_0}$. From (\ref{eq_1146}) and the conclusion of the lemma,
 $$ \left. \frac{d}{dt} \int_{Z_{t_0 + t}} \rho \right|_{t=0} = 
 \left. \frac{d}{dt} \int_{\vphi_t(Z)} \rho \right|_{t=0} = 
  -\int_{Z \cap U}  \langle V, N \rangle \cdot H_{\rho} =  -\int_{Z_{t_0} \cap U}  \frac{H_{\rho}}{|\nabla f|}, $$
  completing the proof.
\end{proof}

It is known that domains that are bounded by a smooth hypersurface of {\it constant weighted mean curvature} are critical points of the functional
$$ \Omega \mapsto \left( \int_{\partial \Omega} \rho \right) - \alpha \cdot \sigma(\Omega), $$
where $-\alpha$ is the weighted mean curvature, and where $\sigma$ is the Riemannian volume measure. Let us  demonstrate this criticality. Suppose that  $\Omega_0 \subseteq M$ is an open set with a smooth $(n-1)$-dimensional boundary $Z = \partial \Omega_0$, such that $H_{\rho}$ is constant on the hypersurface $Z$. Let us denote this constant by $-\alpha$.
According to Lemma \ref{lem_1001_}, for any vector field $V$ generating the flow $(\vphi_t)_{t \in \RR}$, setting $\Omega_t = \vphi_t(\Omega_0)$, 
$$ \left. \frac{d}{dt} \left( \int_{\partial \Omega_t} \rho - \alpha \cdot \sigma(\Omega_t) \right)\right|_{t=0}
=  \int_{Z} \langle V, N \rangle \cdot \alpha
\, - \, \alpha \cdot \int_Z \langle V, N \rangle = 0, $$
where the derivative of $\sigma(\Omega_t)$ is computed using the usual formula for the variation of Riemannian volume under deformations.

\begin{proposition} Let $M$ be a Riemannian manifold and let $f: M \rightarrow \RR$ be a smooth function.
Assume that $x \in M$ is a regular point of the function $f$.
Denote $t = f(x), Z_t = \{ f = t \}, \rho = |\nabla f|$ and $N = \nabla f / |\nabla f|$. Then at the point $x$,
$$ H_{\rho} = -\Delta f $$
where $H_{\rho}(x)$ is the weighted mean curvature of $Z_t$ at the point $x$ with respect to the normal $N$.
\label{prop_1444}
\end{proposition}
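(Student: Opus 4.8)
The plan is to relate both the mean curvature $H$ of the level set and the Laplacian $\Delta f$ to a single object, namely the divergence of the unit normal field $N = \nabla f / |\nabla f|$, which is well-defined and smooth throughout a neighborhood of the regular point $x$ since $|\nabla f| \neq 0$ there. The desired identity $-\Delta f = H_{\rho}$ will then drop out of the product rule for divergence.

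First I would recall the definition $H_{\rho} = \rho H - \langle \nabla \rho, N \rangle$, with $H = -\Tr[T_x Z \ni u \mapsto \nabla_u N]$. The key observation is that the mean curvature is, up to sign, the full ambient divergence of $N$. Choosing an orthonormal basis of $T_x M$ consisting of a basis of $T_x Z_t$ together with $N$ itself, I would decompose
$$ \div N = \Tr\left[ T_x M \ni u \mapsto \nabla_u N \right] = \Tr\left[ T_x Z_t \ni u \mapsto \nabla_u N \right] + \langle \nabla_N N, N \rangle. $$
Since $N$ has unit length, differentiating $\langle N, N \rangle \equiv 1$ in the direction $N$ gives $\langle \nabla_N N, N \rangle = 0$, so the normal contribution vanishes and we are left with $\div N = -H$ at the point $x$.

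Next I would write $\nabla f = \rho N$ and apply the product rule for the divergence of a scalar multiple of a vector field:
$$ \Delta f = \div(\nabla f) = \div(\rho N) = \rho \, \div N + \langle \nabla \rho, N \rangle = -\rho H + \langle \nabla \rho, N \rangle. $$
Rearranging yields $-\Delta f = \rho H - \langle \nabla \rho, N \rangle = H_{\rho}$, which is exactly the claimed formula.

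I do not expect a genuine obstacle here, as the entire argument is local and the computation is short. The only point requiring mild care is the splitting of the trace into its tangential part along $Z_t$ and its part along $N$, together with the verification that the latter vanishes because $N$ is a unit field; the regularity hypothesis $|\nabla f| \neq 0$ near $x$ is what makes $N$ and the hypersurface $Z_t$ smooth and guarantees every expression above is well-defined.
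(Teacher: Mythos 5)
Your proof is correct, and it takes a genuinely different (and shorter) route than the paper. You compute the ambient divergence of the unit normal field $N = \nabla f/|\nabla f|$, split the trace over an orthonormal basis of $T_xM$ adapted to $T_x Z_t \oplus \RR N$, kill the normal--normal term via $\langle \nabla_N N, N\rangle = \tfrac{1}{2} N\langle N,N\rangle = 0$, and conclude $\div N = -H$; the identity then falls out of $\Delta f = \div(\rho N) = \rho\,\div N + \langle \nabla\rho, N\rangle$. All steps check: in particular $\nabla_u N \perp N$ for every $u$, so the shape operator genuinely maps $T_xZ_t$ to itself and the tangential block of the trace is exactly $-H$, and the regularity of $x$ makes $N$ and $\rho$ smooth nearby so the ambient divergence is legitimate. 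The paper instead introduces the signed distance function $r$ to $Z_t$, uses that its integral curves are geodesics to get $-H = \Delta r$, factors $f - t = u\,r$ with $u$ smooth and $u = |\nabla f|$ on $Z_t$, and expands $\Delta(ur)$; along the way it derives the pointwise identity $\langle \nabla\rho, N\rangle = \nabla^2 f(N,N)$, which your argument bypasses. Your version is the more standard "mean curvature of a level set equals minus the divergence of the normalized gradient" computation and is arguably cleaner for the statement as posed; the paper's version is more elaborate but exhibits the normal second derivative explicitly.
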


\begin{proof} Abbreviate $Z = Z_t$. We define, in a neighborhood of the point $x$, the signed distance function
	$$ r(y) = \left \{ \begin{array}{cc} d(y, Z) & f(y) \geq t \\ -d(y,Z) & f(y) < t \end{array} \right. $$
where $d(y, Z) = \inf_{z \in Z} d(y,z)$
and $d$ is the Riemannian distance function.
Then $r$ is a smooth function defined near $x$ such that $\nabla r = N = \nabla f/|\nabla f|$ at points of $Z$. Moreover, $\nabla_{\nabla r} \nabla r = 0$ since the integral curves of $\nabla r$ are geodesics.
Consequently, the usual mean curvature of the hypersurface $Z$ at the point $x$ with respect to the normal $N$ satisfies
\begin{equation}  -H = \Tr[ T_x Z \ni v \mapsto \nabla_v \nabla r ] = \Tr[ (\nabla^2 r)^{\sharp}|_{T_x Z} ] = \Tr[ (\nabla^2 r)^{\sharp} ] = \Delta r. \label{eq_1152}
\end{equation}
The functions $f - t$ and $r$ both vanish on the hypersurface $Z$ near the point $x$, with $\nabla r = \nabla f / |\nabla f| \neq 0$ at the point $x$.  It follows that
the function $u =  (f-t) / r$ is well-defined and smooth in a neighborhood of the point $x$, where we set $u(y) = |\nabla f(y)|$ for $y \in Z$.
Indeed, if $\gamma: (-\eps, \eps) \rightarrow M$ is a unit-speed geodesic with $\gamma(0) = x$ and $\dot{\gamma}(0) = N$, then by Taylor's theorem,
$$ f(\gamma(s)) -t = s |\nabla f(x)| + (s^2/2) \cdot \nabla^2 f(N, N) + o(s^2). $$
Consequently,
$$  \langle \nabla u, N \rangle = \lim_{s\rightarrow 0} \frac{u(\gamma(s)) - u(x)}{s} =
\lim_{s \rightarrow 0} \frac{(f(\gamma(s)) - t)/s - |\nabla f(x)|}{s} = \frac{1}{2} \nabla^2 f(N, N). $$
Therefore, since $\rho = \sqrt{|\nabla f|^2}$,
\begin{equation} \langle \nabla \rho, N \rangle =
\frac{N \langle \nabla f, \nabla f \rangle}{2 |\nabla f|} = \frac{\nabla^2 f(\nabla f, N)}{|\nabla f|}  = \nabla^2 f(N, N) = 2 \langle \nabla u, N \rangle. \label{eq_1212}
\end{equation}
According to (\ref{eq_1152}) and (\ref{eq_1212}), at the point $x \in Z$,
\begin{align*}
\Delta f & = \Delta (f-t) = \Delta (u r) = (\Delta u) r + 2 \langle \nabla u, \nabla r \rangle + u \Delta r  = 2 \langle \nabla u, N \rangle  + |\nabla f| \Delta r
\\ & = \langle \nabla \rho, N \rangle - \rho H = -  H_{\rho},
\end{align*}
as promised.
\end{proof}

Consider the case where $M$ is connected and where $f: M \rightarrow \RR$ is a non-constant Laplace eigenfunction, thus  $\Delta f = -\lambda f$ for some $\lambda > 0$. It follows from Proposition \ref{prop_1444}
that when $t$ is a regular value of $f$, the level set $Z_t = \{ f = t \}$ has a  constant weighted mean curvature $-\lambda t$, where the weight function is
\begin{equation}  \rho = |\nabla f| \label{eq_1101} \end{equation} and where the weighted mean curvature is computed with respect to the normal $\nabla f / |\nabla f|$.
In particular, if $0$ is a regular value of $f$,
then the nodal set $Z_0$ is a smooth hypersurface which is a {\it weighted minimal hypersurface}, i.e., its weighted mean curvature vanishes.
From Proposition \ref{lem_1001} and Proposition \ref{prop_1444} we also conclude the following:

\begin{corollary} For a compact Riemannian manifold $M$, a smooth function $f: M \rightarrow \RR$ and a regular value $t \in (\inf f, \sup f)$ of the function $f$, setting $Z_t = \{  f = t \}$,
	\begin{equation}  \frac{d}{dt} \int_{Z_t} |\nabla f| = \int_{Z_t}
	\frac{\Delta f}{|\nabla f|}. \label{eq_202} \end{equation}
	\label{cor_958}
\end{corollary}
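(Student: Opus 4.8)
The plan is to apply Proposition \ref{lem_1001} with the weight $\rho = |\nabla f|$ and then invoke Proposition \ref{prop_1444} to rewrite the resulting weighted mean curvature as $-\Delta f$. The one thing standing in the way of a direct application is that Proposition \ref{lem_1001} demands a weight that is \emph{compactly supported} in the open set $U = \{ |\nabla f| \neq 0 \}$, whereas $|\nabla f|$ itself vanishes precisely on $M \setminus U$ and is therefore supported on the closure of $U$ rather than on a compact subset of $U$. The regular-value hypothesis is exactly what lets me localize around this obstruction: since $t \in (\inf f, \sup f)$ is a regular value and $M$ is compact, the level set $Z_t$ is a compact smooth hypersurface entirely contained in $U$.

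First I would fix a smooth cutoff $\chi: M \rightarrow [0,1]$ that is compactly supported in $U$ and equals $1$ on an open neighborhood $W$ of $Z_t$, and set $\rho = \chi |\nabla f|$, which is smooth and compactly supported in $U$, hence admissible in Proposition \ref{lem_1001}. A routine compactness argument then shows that nearby level sets do not escape $W$: if some sequence $t_j \to t$ carried points $x_j \in Z_{t_j} \setminus W$, then a convergent subsequence would produce a limit point lying in $Z_t \cap (M \setminus W)$, contradicting $Z_t \subseteq W$. Hence there exists $\delta > 0$ with $Z_{t'} \subseteq W$ whenever $|t' - t| < \delta$, and on $W$ we have $\chi \equiv 1$, so $\int_{Z_{t'}} |\nabla f| = \int_{Z_{t'}} \rho$ for all such $t'$.

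Next I would differentiate at $t$. Applying Proposition \ref{lem_1001} to the admissible weight $\rho$ gives $\frac{d}{dt} \int_{Z_t} \rho = -\int_{Z_t \cap U} H_{\rho} / |\nabla f|$, with $H_{\rho}$ computed relative to the normal $N = \nabla f / |\nabla f|$. On $Z_t$ we have $\chi \equiv 1$ and $\nabla \chi = 0$, since $\chi$ is constant on the open set $W$; thus $\nabla \rho = |\nabla f| \nabla \chi + \chi \nabla |\nabla f| = \nabla |\nabla f|$ on $Z_t$, and from the definition $H_{\rho} = \rho H - \langle \nabla \rho, N \rangle$ it follows that the weighted mean curvatures of $\rho$ and of $|\nabla f|$ agree pointwise on $Z_t$. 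Because every point of $Z_t$ is a regular point of $f$, Proposition \ref{prop_1444} identifies this common value as $-\Delta f$.

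Finally, combining the three facts and using $Z_t \cap U = Z_t$ yields $\frac{d}{dt} \int_{Z_t} |\nabla f| = \frac{d}{dt} \int_{Z_t} \rho = -\int_{Z_t} (-\Delta f)/|\nabla f| = \int_{Z_t} \Delta f / |\nabla f|$, which is the asserted identity. I expect the only genuine obstacle to be the support mismatch resolved by the cutoff $\chi$; once the localization is set up, the formula is a direct substitution, and the sole analytic input beyond the cited propositions is the elementary compactness step guaranteeing that neighboring level sets stay inside the region where $\chi \equiv 1$.
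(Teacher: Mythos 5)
Your proof is correct and takes essentially the same approach as the paper, which deduces Corollary \ref{cor_958} directly from Proposition \ref{lem_1001} and Proposition \ref{prop_1444}; your cutoff $\chi$ correctly supplies the localization needed to make the weight $|\nabla f|$ admissible (compactly supported in $U=\{|\nabla f|\neq 0\}$), a detail the paper leaves implicit. No gaps.
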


\begin{remark} {\rm
Let  $f:M \rightarrow \RR$ be a smooth function defined on a compact Riemannian manifold $M$. For simplicity suppose that $f$ has only two critical points that are assumed to be non-degenerate (in this case  $M$ is diffeomorphic to a
sphere, by Morse theory). Say that we would like to find a  weight $\rho$ with respect to which all level sets of $f$ are weighted minimal hypersurfaces (except, of course, for the two critical points).
If such a weight $\rho$ is found, then we may consider the measure $\theta$ on $M$ whose density equals $\rho  |\nabla f|$. One may show (see the next section)
that the value distribution density of $f$ under $\theta$ is constant in the interval $(\inf f, \sup f)$.

\medskip However, it is not always possible to find such a smooth weight $\rho$.
The equation for $\log \rho$ that guarantees that all level sets of $f$ are weighted minimal hypersurfaces is
\begin{equation} \langle \nabla f, \nabla (\log \rho) \rangle = -\Delta f + \frac{\nabla^2 f(\nabla f, \nabla f)}{|\nabla f|^2}. \label{eq_546} \end{equation}
The partial differential equation (\ref{eq_546}) is solved by the method of characteristics, essentially by integrating the right-hand side of (\ref{eq_546}) along integral curves of $\nabla f$. It is not always the case that there exists a solution $\rho$ that is smooth in the entire manifold $M$. Indeed, any two integral curves of $\nabla f$ connect the two critical points of $f$,
yet the right-hand side of (\ref{eq_546}) could integrate to two different values along the two different integral curves.

\medskip We also think that in dimensions three and above, generically it is impossible to find a smooth weight $\rho$ with respect to which all level sets of $f$ have constant weighted mean curvature.
}
\end{remark}

\section{Eigenfunctions of the Laplacian}
\label{sec3}

The concepts of weighted mean curvature and weighted minimal surface are arguably helpful for forming an intuition regarding nodal sets. However, shorter proofs may sometimes be given without any appeal
to these concepts. For example, formula (\ref{eq_202}) may also be deduced  as a limit case of the following easy proposition:

\begin{proposition} For a compact, connected, Riemannian manifold $M$, a smooth function $f: M \rightarrow \RR$ and two regular values $t_1, t_2 \in (\inf f, \sup f)$ of the function $f$ with $t_1 < t_2$,
\begin{equation} 	\int_{Z_{t_2}} |\nabla f| - \int_{Z_{t_1}} |\nabla f| =  \int_{M_{t_1,t_2}} \Delta f, \label{eq_129} \end{equation}	
where $M_{t_1, t_2} = \{ t_1 < f < t_2 \}, Z_{t_i} = \{ f = t_i\}$ for $i=1,2$ and the integral over $M_{t_1,t_2}$ is carried out with respect to the Riemannian volume measure in $M$.
\label{prop_101}
\end{proposition}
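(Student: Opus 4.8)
The plan is to recognize (\ref{eq_129}) as an instance of the divergence theorem applied to the gradient vector field $\nabla f$ over the region $M_{t_1,t_2} = \{ t_1 < f < t_2 \}$. First I would record the regularity of this region: since $t_1$ and $t_2$ are regular values of $f$ and $M$ is compact, the level sets $Z_{t_1}$ and $Z_{t_2}$ are compact smooth hypersurfaces, and the closure $\overline{M_{t_1,t_2}} = \{ t_1 \leq f \leq t_2 \}$ is a compact smooth manifold with boundary. Its boundary is exactly $Z_{t_1} \cup Z_{t_2}$: at any point $x$ with $f(x) = t_i$ we have $\nabla f(x) \neq 0$, so $f$ takes values both above and below $t_i$ arbitrarily close to $x$, placing $x$ on the topological boundary of $M_{t_1,t_2}$. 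In particular all the integrals in (\ref{eq_129}) are finite.

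Next I would apply the divergence theorem on $M_{t_1,t_2}$ to the smooth vector field $\nabla f$, using the identity $\div(\nabla f) = \Delta f$:
\[ \int_{M_{t_1,t_2}} \Delta f = \int_{\partial M_{t_1,t_2}} \langle \nabla f, N_{\mathrm{out}} \rangle, \]
where $N_{\mathrm{out}}$ denotes the outer unit normal to the boundary.

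The one point requiring care is the determination of the outer normal on each boundary component, since $\nabla f$ points in the direction of increasing $f$. On $Z_{t_2}$ the function $f$ exceeds $t_2$ just outside the region, so $f$ increases in the outward direction and $N_{\mathrm{out}} = \nabla f / |\nabla f|$; hence $\langle \nabla f, N_{\mathrm{out}} \rangle = |\nabla f|$ there. On $Z_{t_1}$ the function $f$ drops below $t_1$ just outside the region, so $N_{\mathrm{out}} = -\nabla f / |\nabla f|$ and $\langle \nabla f, N_{\mathrm{out}} \rangle = -|\nabla f|$. Substituting these into the boundary integral and splitting $\partial M_{t_1,t_2} = Z_{t_2} \cup Z_{t_1}$ into its two disjoint components yields exactly the left-hand side of (\ref{eq_129}).

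There is essentially no hard analytic step here; the only obstacle is bookkeeping, namely verifying that $\overline{M_{t_1,t_2}}$ is a genuine compact manifold with boundary $Z_{t_1} \cup Z_{t_2}$ so that the divergence theorem is applicable, and then keeping track of the sign of the outer normal on the two components. The hypotheses that $t_1, t_2$ are regular values and that $M$ is compact are precisely what make both of these points routine.
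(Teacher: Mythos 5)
Your proof is correct and is exactly the paper's argument: the paper's entire proof is to apply the divergence theorem to $V = \nabla f$ on $M_{t_1,t_2}$, and your write-up simply fills in the routine details (identifying $\partial M_{t_1,t_2} = Z_{t_1} \cup Z_{t_2}$ and the signs of the outer normals) that the paper leaves implicit.
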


\begin{proof} Consider the vector field $V = \nabla f$ whose divergence equals $\Delta f$, and apply the divergence theorem in the domain $M_{t_1, t_2}$.
\end{proof}

In the case where $f$ is an eigenfunction of the Laplacian, we
would like to eliminate the requirement that $t_1$ and $t_2$ are regular values from
Proposition \ref{prop_101}. The following lemma serves this purpose:

\begin{lemma} Let $M$ be a compact, connected, Riemannian manifold, let $\lambda > 0$ and let $f: M \rightarrow \RR$ be a non-constant, smooth function with $\Delta f = -\lambda f$. Then the function
	$$ \alpha(t) = \int_{Z_t} |\nabla f| \qquad \qquad \qquad (t \in \RR) $$
	is continuous in $\RR$, where the integral is carried out with respect to the $(n-1)$-dimensional Hausdorff measure.	 \label{lem_325}
\end{lemma}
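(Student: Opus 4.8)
The plan is to reduce the continuity of $\alpha$ to that of the manifestly continuous function $G(t) = -\lambda \int_{\{f<t\}} f \, d\sigma$, by proving the identity $\alpha(t) = G(t)$ for \emph{every} $t \in \RR$. First I would record that $G$ is continuous. Its only candidate jump at a value $t$ has size $-\lambda t \, \sigma(\{f=t\})$, so it suffices that every level set of a non-constant eigenfunction is $\sigma$-null. This is standard: off the critical set $\{\nabla f = 0\}$ each level set is a smooth hypersurface, and the critical set itself is $\sigma$-null by unique continuation for solutions of $\Delta f + \lambda f = 0$. Hence $\sigma(\{f=t\}) = 0$ and $G$ is continuous.

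Next, for a regular value $t$ the sublevel set $\{f < t\}$ has smooth boundary $Z_t$, and applying the divergence theorem to the vector field $\nabla f$ (which is the content of Proposition~\ref{prop_101}) gives $\alpha(t) = \int_{Z_t} \langle \nabla f, \nu \rangle \, d\mathcal H^{n-1} = \int_{\{f<t\}} \Delta f = G(t)$, where $\nu = \nabla f / |\nabla f|$ is the outer unit normal. Since by Sard's theorem the regular values are dense and co-null, this already shows $\alpha = G$ on a dense co-null set, and in particular $\alpha$ is continuous at regular values.

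The heart of the matter is to promote $\alpha(t^*) = G(t^*)$ to a critical value $t^*$. One inequality comes cheaply from lower semicontinuity: writing $\rho_\epsilon = |\nabla f| \cdot \chi_\epsilon$, with $\chi_\epsilon$ a cutoff supported in $U = \{|\nabla f| \ne 0\}$ and increasing pointwise to $\mathbf{1}_{\{|\nabla f| > 0\}}$, each map $t \mapsto \int_{Z_t} \rho_\epsilon$ is continuous by Proposition~\ref{lem_1001}, and by monotone convergence $\alpha(t) = \sup_\epsilon \int_{Z_t} \rho_\epsilon$ on every level set. Thus $\alpha$ is a supremum of continuous functions, hence lower semicontinuous; combined with $\alpha = G$ on a dense set and the continuity of $G$ this forces $\alpha(t^*) \le \liminf_{t \to t^*} \alpha(t) \le G(t^*)$. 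For the reverse inequality I would read $\alpha(t^*) = \int_{\{f<t^*\}} \Delta f$ directly off the Gauss--Green formula for the set of finite perimeter $\{f < t^*\}$: its reduced boundary is the smooth hypersurface $Z_{t^*} \cap U$ with measure-theoretic outer normal $\nabla f / |\nabla f|$, and the weighted flux of $\nabla f$ through it equals $\int_{Z_{t^*}} |\nabla f|$, since the singular part $Z_{t^*} \cap \{\nabla f = 0\}$ neither lies in the reduced boundary nor carries any flux, the integrand $|\nabla f|$ vanishing there.

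The main obstacle is exactly this last step: justifying Gauss--Green at a singular level set, equivalently showing that no weighted area is lost in the limit $t \to t^*$. Concretely one needs that the sublevel sets have locally finite perimeter with perimeter measure equal to $\mathcal H^{n-1}$ restricted to $Z_t \cap U$, which rests on the finiteness $\mathcal H^{n-1}(Z_t) < \infty$ of eigenfunction level sets and on the $\sigma$-negligibility of the critical set. A more hands-on alternative giving the same bound is to fix a neighbourhood $W$ of $Z_{t^*} \cap \{\nabla f = 0\}$ whose boundary lies in $U$ and meets $Z_{t^*}$ transversally, and to apply the divergence theorem to $\nabla f$ on the thin slabs $W \cap \{s < f < t^*\}$: the volume term $\int \Delta f$ and the lateral flux through $\partial W$ both tend to zero as $s \uparrow t^*$, while the contribution near the critical set is suppressed by $|\nabla f| \to 0$ there, yielding $\limsup_{t \to t^*} \alpha(t) \le \alpha(t^*)$ and hence $G(t^*) \le \alpha(t^*)$. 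Either way, combining the two inequalities gives $\alpha(t^*) = G(t^*)$, and the continuity of $\alpha$ follows from that of $G$.
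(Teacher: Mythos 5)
Your argument is correct in outline, but it takes a genuinely different route from the paper. The paper first establishes the \emph{uniform} bound $\sup_t \mathcal{H}^{n-1}(Z_t)<\infty$ (via Aronszajn's unique continuation combined with a Taylor-polynomial/Lagrange-interpolation counting argument), and then deduces continuity of $\alpha$ by uniform approximation: with $\alpha_\eps(t)=\int_{Z_t}\eta(|\nabla f|/\eps)|\nabla f|$, each $\alpha_\eps$ is continuous by Proposition \ref{lem_1001} and $|\alpha_\eps(t)-\alpha(t)|\le C\eps$ uniformly in $t$. You instead identify $\alpha$ with the manifestly continuous function $G(t)=-\lambda\int_{\{f<t\}}f\,d\sigma$ at \emph{every} $t$, using the generalized Gauss--Green formula for the finite-perimeter set $\{f<t^*\}$; this is heavier machinery (Federer's criterion, the structure of the reduced boundary) but buys more, since it yields the exact identity $\alpha(t)=-\lambda\int_{\{f<t\}}f$ for all $t$, i.e.\ Corollary \ref{cor_831} and the absolute continuity of $\psi$ in one stroke, and makes the lower-semicontinuity half of your argument redundant. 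Two caveats. First, both routes hinge on the same nontrivial analytic input, the finiteness of $\mathcal{H}^{n-1}(Z_t)$, which you cite but do not prove; this is precisely where the paper's real work lies, so your proof is only as complete as that citation (the fact is indeed in Donnelly--Fefferman and Hardt--Simon, so this is acceptable, but it should be made explicit that nothing elementary replaces it). Second, your ``hands-on alternative'' does not actually avoid this input: to conclude that the flux $\int_{Z_s\cap W}|\nabla f|$ is small from $|\nabla f|\to 0$ near the critical set, you need $\mathcal{H}^{n-1}(Z_s\cap W)$ bounded uniformly for $s$ near $t^*$, which is again the paper's uniform volume bound. The $\sigma$-nullity of level sets and of the critical set, which you need for the continuity of $G$, is fine as sketched (iterated a.e.\ vanishing of derivatives on a positive-measure level set contradicts Aronszajn).
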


\begin{proof} For any value $t \in \RR$, the set $Z_t = \{ f = t \}$ has a finite $(n-1)$-dimensional Hausdorff measure, and in fact,
\begin{equation}  \sup_{t \in \RR} Vol_{n-1}(Z_t) < \infty. \label{eq_217} \end{equation}
These known facts  follow from the compactness of $M$ and from a result by Aronszajn \cite{aron} according to which  $f - t$ cannot vanish to infinite order at a point of $Z_t$. Let us briefly sketch the argument, see also Donelly and Fefferman \cite{DF} and Hardt and Simon \cite{HS}. By compactness, it suffices to show that for any $p \in M$ there is a neighborhood $U$ with
\begin{equation}  \sup_{t \in \RR} Vol_{n-1}(U \cap Z_t) < \infty. \label{eq_1020} \end{equation}
We may thus pass to local coordinates $x^1,\ldots,x^n$ near $p$. In these local coordinates, we look at a non-zero Taylor polynomial $Q$ of degree $m$ that approximates $f$.
Consider the $m$-homogeneous part of $Q$, which is a non-zero polynomial in $\RR^n$. Pick
$n$ linearly-independent unit vectors in $\RR^n$ at which the $m$-homogeneous part of $Q$ does not vanish.

\medskip We have thus found $n$ linearly-independent directions in $\RR^n$, such that any line parallel to any of these directions intersects any level set of $Q$ at most $m$ times. The same holds for any level set of $f$ in these local coordinates: Otherwise, $f-t$ vanishes at $m+1$ points on such a line,
in contradiction to the proximity between the Taylor polynomial of $f$, restricted to this line, and the Lagrange interpolation polynomial with these $m+1$ nodes.
This provides a uniform bound for the volumes of the level sets of $f$, proving (\ref{eq_1020}).

\medskip For $\eps > 0$ we set
$$ \alpha_{\eps}(t) = \int_{Z_t} \eta(|\nabla f| / \eps) \cdot|\nabla f|   $$
where $\eta: \RR\rightarrow \RR$ is some fixed, smooth, non-decreasing function with $\eta(s) = 0$ for $s \leq 1/2$ and $\eta(s) = 1$ for $s \geq 1$. For $\eps > 0$, the
integrand $\rho_{\eps} =  \eta(|\nabla f| / \eps) \cdot |\nabla f| $ is smooth and compactly-supported in the open set $\{ |\nabla f| > 0 \}$. It thus follows from Proposition
\ref{lem_1001} that the function $\alpha_{\eps}$ is continuous in $\RR$, and in fact even differentiable. Denoting the supremum in (\ref{eq_217}) by $C > 0$,
we have $|\alpha_{\eps}(t) - \alpha(t)| \leq C \eps$ for all $t$. Thus $\alpha_{\eps}$ converges uniformly to $\alpha$, which is consequently a continuous function.
\end{proof}

\begin{corollary} The conclusion of Proposition \ref{prop_101}
	holds true for all $t_1, t_2 \in (\inf f, \sup f)$, provided that $f$ is non-constant and that $\Delta f = - \lambda f$  for some $\lambda > 0$. 	\label{cor_831}
\end{corollary}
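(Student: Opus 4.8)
The plan is to upgrade Proposition~\ref{prop_101}, which holds only for regular values $t_1, t_2$, to all values in $(\inf f, \sup f)$ by exploiting the continuity established in Lemma~\ref{lem_325}. The key structural fact is Sard's theorem: the set of critical values of the smooth function $f$ has Lebesgue measure zero, hence the regular values are dense in $(\inf f, \sup f)$. This density, combined with the already-proven continuity of $\alpha(t) = \int_{Z_t} |\nabla f|$, is exactly what one needs to pass from an identity known on a dense set to an identity on the whole interval.

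Concretely, I would first fix arbitrary $t_1 < t_2$ in $(\inf f, \sup f)$. By Sard's theorem I can choose sequences of regular values $s_j \to t_1$ and $u_j \to t_2$ with $s_j \downarrow t_1$ and $u_j \uparrow t_2$ (or at least converging; monotone approach is convenient for the volume term). For each pair $(s_j, u_j)$ of regular values Proposition~\ref{prop_101} gives
\begin{equation}
\int_{Z_{u_j}} |\nabla f| - \int_{Z_{s_j}} |\nabla f| = \int_{M_{s_j, u_j}} \Delta f = -\lambda \int_{M_{s_j, u_j}} f,
\label{eq_plan}
\end{equation}
where I have substituted $\Delta f = -\lambda f$. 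Now I take the limit $j \to \infty$ on both sides. On the left, continuity of $\alpha$ from Lemma~\ref{lem_325} gives $\alpha(u_j) \to \alpha(t_2)$ and $\alpha(s_j) \to \alpha(t_1)$. On the right, the domains $M_{s_j, u_j} = \{ s_j < f < u_j \}$ converge to $M_{t_1, t_2} = \{ t_1 < f < t_2 \}$, and since $f$ is bounded on the compact manifold $M$, dominated convergence yields $\int_{M_{s_j,u_j}} \Delta f \to \int_{M_{t_1,t_2}} \Delta f$. The only subtlety is the behaviour of the boundary level sets $Z_{t_1}, Z_{t_2}$ in the volume integral, but these have measure zero with respect to the Riemannian volume (the coarea formula, or directly the fact that level sets are lower-dimensional away from the nowhere-dense critical set, shows $\sigma(Z_t) = 0$ for every $t$), so whether one writes open or closed sublevel sets is immaterial for the $\Delta f$ integral.

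Passing to the limit in \eqref{eq_plan} then produces exactly the identity \eqref{eq_129} of Proposition~\ref{prop_101} for the arbitrary values $t_1, t_2$, which is the claim of the corollary. The main obstacle, such as it is, is purely a matter of justifying the two limits cleanly: the left-hand limit rests entirely on Lemma~\ref{lem_325} and so is immediate, while the right-hand limit needs the remark that $\sigma(Z_t) = 0$ so that no mass is lost or gained at the endpoints as the approximating domains sweep up to $M_{t_1,t_2}$. I would state this measure-zero fact explicitly (it follows from \eqref{eq_217} together with the coarea formula, or simply because $Z_t$ is contained in a countable union of hypersurfaces) and otherwise the argument is a routine density-plus-continuity limiting procedure with no genuine difficulty.
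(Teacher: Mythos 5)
Your proposal is correct and follows essentially the same route as the paper: approximate $t_1,t_2$ by regular values (dense by Sard), use Lemma \ref{lem_325} for continuity of the left-hand side, and bounded/dominated convergence for the right-hand side. The remark that $\sigma(Z_t)=0$ is a valid supporting detail that the paper leaves implicit.
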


\begin{proof} By Sard's lemma, equation
	(\ref{eq_129}) holds true for all $t_1 < t_2$ in a dense subset of the interval $(\inf f, \sup f)$. All that remains is to show that both the left-hand side and the right-hand side of (\ref{eq_129}) are continuous in $t_1$ and $t_2$.
	The continuity of the left-hand side is the content of Lemma \ref{lem_325}. The continuity of the right-hand side of (\ref{eq_129}) in $t_1$ and $t_2$ follows from the Lebesgue bounded convergence theorem.
\end{proof}

Next we will apply the coarea formula
for a smooth function $f: M \rightarrow \RR$, where $M$ is still a compact, connected, Riemannian manifold. This formula implies that for any  bounded, measurable, test function $\vphi: \RR \rightarrow \RR$,
\begin{equation}  \int_M \vphi(f) |\nabla f|^2 =
\int_{-\infty}^{\infty} \vphi(t)  \left(\int_{Z_t} |\nabla f| \right) dt, \label{eq_444} \end{equation}
with  $Z_t = \{ f = t \}$. Indeed,
formula (\ref{eq_444}) follows by
substituting $g = |\nabla f| \vphi(f)$
 in the statement of the coarea formula in Federer \cite[Theorem 3.1]{federer},
see also Evans and Gariepy \cite{EG}.

\begin{proof}[Proof of Theorem \ref{thm_1033}] Since $M$ is compact and connected and $f$ is non-constant,
 $\Delta f = -\lambda f$ for some $\lambda > 0$. Write $\mu$ for the measure on $M$ whose density is $|\nabla f|^2$.
 Formula (\ref{eq_444}) states that the push-forward measure $f_* \mu$ has a density in $\RR$ that is equal to $\psi(t) = \int_{Z_t} |\nabla f|$.

\medskip The function $\psi$ is the un-normalized value distribution density of $f$ under $\mu$, in the sense that $\frac{1}{\mu(M)} \cdot \psi$ is the actual value distribution density as defined above. The function $\psi$ is continuous in $\RR$, according to Lemma \ref{lem_325}.
	Let us prove that $\psi$ is decreasing on $[0, \sup f]$. For any $t_1, t_2 \in [0, \sup f]$ with $t_1 < t_2$, by Corollary
	\ref{cor_831},
	$$ \psi(t_2) - \psi(t_1) = \int_{Z_{t_2}} |\nabla f| -
	\int_{Z_{t_1}} |\nabla f| = \int_{M_{t_1,t_2}} \Delta f =
-\lambda \cdot \int_{M_{t_1,t_2}} f < 0, $$	
since $M_{t_1, t_2} = \{ t_1 < f < t_2 \}$ is a set of positive measure in which $f$ is positive. The proof that $\psi$ is increasing in $[\inf f, 0]$ is similar.
\end{proof}

\begin{remark} {\rm
As can be seen from the proof of Theorem \ref{thm_1033}, the un-normalized value distribution density $\psi$ of the function $f$ under $\mu$ is absolutely continuous in $\RR$. Its derivative equals the function $-\lambda t g(t)$, where $g$ is the un-normalized value distribution density of $f$ under the Riemannian volume measure.}
\end{remark}

\begin{remark} {\rm
Apart from regularity issues, in the proof of Theorem \ref{thm_1033}
we have not made any use of the eigenfunction equation $\Delta f = -\lambda f$ beyond the equality of signs.
$$ \sgn(\Delta f) = -\sgn(f). $$}
\end{remark}

\section{Manifolds with boundary}
\label{sec4}

Let us generalize Theorem
\ref{thm_1033} to the case
of the {\it weighted Laplacian}.
Let $(M, g, \rho)$ be an $n$-dimensional, weighted, Riemannian manifold, and assume that $\rho$ is positive everywhere. The usual definition of the weighted Laplacian $L$ of the weighted manifold $(M,g, \rho)$ is
\begin{equation} L u = \Delta u + \langle \nabla (\log \rho), \nabla u \rangle \label{eq_944} \end{equation}
for a smooth function $u: M \rightarrow \RR$. See Bakry, Gentil and Ledoux \cite{BGL} and
Grigor'yan \cite{gri}
for background on weighted Riemannian manifolds and their associated Laplacian. The weighted Laplacian $L$  satisfies the integration by parts formula
$$ \int_M (L u) v d \sigma = -\int_M \langle \nabla u, \nabla v \rangle d \sigma, $$
where in this section we write $\sigma$ for the
measure on $M$ whose density with respect to the Riemannian volume measure is $\rho$. We view $\sigma$ as the uniform measure on the weighted Riemannian manifold $(M, g, \rho)$.
Moreover, we have $L u = \div_{\sigma} (\nabla u)$, where for a vector field $V$ on $M$,
$$ \div_\sigma(V) = \div(V) + \langle \nabla (\log \rho), V \rangle $$
and $\div$ is the usual Riemannian divergence. The divergence theorem shows that for any domain $\Omega \subseteq M$ with a compact closure and a smooth $(n-1)$-dimensional boundary, and for any smooth vector field $V$ on $M$,
\begin{equation}  \int_{\partial \Omega} \langle V, N \rangle \rho = \int_{\Omega} \div_\sigma(V) d \sigma, \label{eq_1524} \end{equation}
where $N$ is the outer unit normal to $\partial \Omega$. Theorem \ref{thm_952}  follows from the case $\rho \equiv 1$ of the following:

\begin{theorem} Let $(M, g, \rho)$ be a connected, weighted Riemannian manifold, such that $\rho$ is positive everywhere. Write $\sigma$ for the measure whose density with respect to the Riemannian volume measure on $M$ equals $\rho$. Let $f: M \rightarrow \RR$ be a non-constant eigenfunction of the weighted Laplacian $L$.
	Consider the measure $\mu$ whose density with respect to $\sigma$ is the function $|\nabla f|^2$. Assume one of the following:
	\begin{enumerate}
		\item[(i)] The case of a closed manifold: The manifold $M$ is a compact  manifold without boundary.
		\item[(ii)] The case of Dirichlet boundary conditions:  The manifold $M$ has a smooth, $(n-1)$-dimensional boundary $\partial M$ and $f|_{\partial M} \equiv 0$. The space $M \cup \partial M$ is compact,
and $f$ is smooth up to the boundary.
		\item[(iii)] The case of Neumann boundary conditions: The manifold $M$ has a smooth $(n-1)$-dimensional boundary $\partial M$ with outer unit normal $N$, and $\langle \nabla f, N \rangle = 0$ on the boundary. The space $M \cup \partial M$ is compact, and $f$ is smooth up to the boundary.
	\end{enumerate}
	
	Then in each of these three cases,
	 the value distribution density of the function $f$ under $\mu$ is strictly-increasing in $[\inf f, 0]$ and strictly-decreasing in $[0, \sup f]$.  \label{thm_540}
\end{theorem}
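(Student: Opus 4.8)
The plan is to transcribe the proof of Theorem~\ref{thm_1033} almost verbatim, systematically replacing the Riemannian volume by $\sigma$, the divergence by the weighted divergence $\div_\sigma$, and the Laplacian by the weighted Laplacian $L = \div_\sigma \nabla$; the only genuinely new ingredient beyond the closed case (i) is the control of the flux through $\partial M$. First I would identify the relevant density. Substituting the test function $g = \rho |\nabla f| \cdot \vphi(f)$ into the coarea formula exactly as in (\ref{eq_444}) gives
\[ \int_M \vphi(f) |\nabla f|^2 \, d\sigma = \int_{-\infty}^{\infty} \vphi(t) \left( \int_{Z_t} \rho |\nabla f| \right) dt, \]
so the push-forward $f_* \mu$ has density $\psi(t) = \int_{Z_t} \rho |\nabla f|$, where $Z_t = \{ f = t \}$. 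The eigenvalue is positive: writing $L f = -\lambda f$, the weighted integration-by-parts identity reads $\int_M (L f) f \, d\sigma = \int_{\partial M} f \langle \nabla f, N \rangle \rho - \int_M |\nabla f|^2 \, d\sigma$, and its boundary term vanishes in case (ii) because $f|_{\partial M} \equiv 0$ and in case (iii) because $\langle \nabla f, N \rangle \equiv 0$, leaving $\lambda = \int_M |\nabla f|^2 d\sigma \big/ \int_M f^2 d\sigma > 0$.

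\medskip The core of the argument is the weighted analogue of Proposition~\ref{prop_101}. For regular values $t_1 < t_2$ I would apply the weighted divergence theorem (\ref{eq_1524}) to $V = \nabla f$, using $\div_\sigma(\nabla f) = L f = -\lambda f$, over the domain $\Omega = \{ t_1 < f < t_2 \}$. Its boundary consists of the interior level sets $Z_{t_1}, Z_{t_2}$, along which the outer unit normal is $\mp \nabla f / |\nabla f|$, together with the portion of $\partial M$ inside $\{ t_1 \le f \le t_2 \}$. Collecting the three fluxes yields
\[ \int_{Z_{t_2}} \rho |\nabla f| - \int_{Z_{t_1}} \rho |\nabla f| + \int_{\partial M \cap \{ t_1 \le f \le t_2 \}} \langle \nabla f, N \rangle \rho = -\lambda \int_{\Omega} f \, d\sigma. \]

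\medskip Everything now hinges on killing the boundary flux for the ranges of interest. In the Neumann case this is immediate, as $\langle \nabla f, N \rangle \equiv 0$ on $\partial M$. In the Dirichlet case $f \equiv 0$ on $\partial M$, so $\partial M \subseteq Z_0$, and hence $\partial M \cap \{ t_1 \le f \le t_2 \} = \emptyset$ as soon as $0 \notin [t_1, t_2]$ --- precisely the situation when $0 < t_1 < t_2 \le \sup f$ or $\inf f \le t_1 < t_2 < 0$. In either case the identity collapses to
\[ \psi(t_2) - \psi(t_1) = -\lambda \int_{\{ t_1 < f < t_2 \}} f \, d\sigma, \]
which is strictly negative when $0 \le t_1 < t_2$ (since $f > 0$ on $\Omega$) and strictly positive when $t_1 < t_2 \le 0$. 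To pass from regular values to all values I would reprove the continuity Lemma~\ref{lem_325}: the uniform bound (\ref{eq_217}) on $Vol_{n-1}(Z_t)$ survives because $M \cup \partial M$ is compact and $f$ is smooth up to the boundary, so Aronszajn's unique continuation and the Taylor-polynomial argument apply in boundary charts as well, and the smooth cut-off approximation through Proposition~\ref{lem_1001} goes through since $\rho$ is continuous and positive on the compact $M \cup \partial M$. Sard's lemma together with this continuity then upgrades the displayed identity to all $t_1 < t_2$ lying strictly on one side of $0$, giving strict monotonicity of $\psi$ on $(0, \sup f)$ and on $(\inf f, 0)$.

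\medskip I expect the delicate point to be the behaviour at the endpoint $t = 0$ in the Dirichlet case, where $\partial M \subseteq Z_0$ forces a jump of $\psi$ at the origin (this is the discontinuity noted after the statement): the domain $\{ 0 < f < t_2 \}$ abuts $\partial M$, the boundary flux no longer vanishes, and one must check that the extra term matches the contribution of $\partial M$ to $\psi(0)$ so that the one-sided limit $\psi(0^+)$ still dominates $\psi(t)$ for $t \in (0, \sup f)$ (and symmetrically at $0^-$). Once this is reconciled with the appropriate convention for the density at $0$, the strict monotonicity extends to the closed intervals $[0, \sup f]$ and $[\inf f, 0]$, and the remainder is a faithful transcription of Sections~\ref{sec2} and~\ref{sec3}.
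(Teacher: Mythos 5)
Your proposal is correct and follows essentially the same route as the paper: the weighted divergence theorem applied to $V=\nabla f$ over the strip $\{t_1<f<t_2\}$, with the boundary flux vanishing either because $\langle\nabla f,N\rangle\equiv 0$ (Neumann) or because the strip is disjoint from $\partial M$ when $t_1,t_2$ have the same sign (Dirichlet), followed by the continuity and coarea arguments transcribed from Lemma~\ref{lem_325} and the proof of Theorem~\ref{thm_1033}. The delicate endpoint $t=0$ in the Dirichlet case that you flag is treated with the same brevity in the paper, and your sketch of its resolution (the boundary portion of $Z_0$ only enlarges $\psi(0)$ relative to the one-sided limits) is the right one.
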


\begin{proof} In each of these three cases, it follows from the divergence theorem with $V = \nabla f$ that for any two regular values $t_1, t_2 \in (\inf f, \sup f)$ with $t_1 < t_2$ and $\sgn(t_1) = \sgn(t_2)$,
	\begin{equation}
	\int_{Z_{t_2}} |\nabla f| \rho -
	\int_{Z_{t_1}} |\nabla f| \rho = \int_{M_{t_1, t_2}} L f d \sigma, \label{eq_534}
	\end{equation}
	where $Z_t = \{ f = t\}$ and $M_{t_1,t_2} = \{ t_1 < f < t_2 \}$.
Indeed, in case (i) the boundary of $M_{t_1, t_2}$ is the smooth manifold $Z_{t_1} \cup Z_{t_2}$, and (\ref{eq_534}) holds true in view of (\ref{eq_1524}). The same holds in case (ii), since $\sgn(t_1) = \sgn(t_2)$. In case (iii), the boundary of $M_{t_1, t_2}$ includes $Z_{t_1}, Z_{t_2}$ and also some parts of $\partial M$. It is still legitimate to apply the divergence theorem in a domain with a piecewise smooth boundary, see Evans and Gariepy \cite{EG}. Since $\langle V, N \rangle = \langle \nabla f, N \rangle = 0$ on the boundary $\partial M$, there is no flux going through $\partial M$ and so (\ref{eq_534}) holds true in case (iii) as well.
	
\medskip 	
The proof that $t \mapsto \int_{Z_t} |\nabla f| \rho$ is continuous is completely  analogous to the argument of Lemma \ref{lem_325} in case (i) and  in case  (iii), as well as in case (ii) for non-zero values of $t$. Similarly to the proof of Corollary \ref{cor_831}, we conclude 	that (\ref{eq_534}) holds true without the requirement that $t_1$ and $t_2$ be regular values.
	Lastly, the use of the coarea formula in the proof of Theorem \ref{thm_1033} as well as the rest of the argument are adapted in a straightforward manner to the weighted case.
\end{proof}

%\begin{remark}{\rm  We wonder whether Theorem \ref{thm_1033} may be extended to Finsler manifolds, or to graphs, or to Alexandrov spaces, or to metric-measure spaces satisfying curvature-dimension condition. }
%\end{remark}

\section{Monotonicity formula for  solid spherical harmonics}
\label{sec5}

In this section we no longer discuss general Riemannian manifolds, but we specialize to the case of $\RR^n$ with its Euclidean metric.  Let us fix a smooth hypersurface $M \subseteq \RR^n$ with a compact closure $\overline{M}$ and a smooth, $(n-2)$-dimensional boundary $\partial M = \overline{M} \setminus M$.
We also fix a smooth weight function $\rho$ defined in a neighborhood of the closure of $M$. Assume that $M$ is a weighted minimal hypersurface, i.e.,
\begin{equation}  H_{\rho} = 0 \label{eq_1134} \end{equation}
everywhere in $M$.
In this section we use a normalization of the $\rho$-divergence that is slightly different from the  one we had in Section \ref{sec4}.
For a vector field
$V$ defined in a neighborhood of $\overline{M}$, we set
\begin{equation}  \div_{M,\rho}(V) \, = \, \sum_{i=1}^{n-1} \rho \langle \partial_{e_i} V, e_i \rangle \, + \,
\langle \nabla \rho, V \rangle
 \label{eq_124} \end{equation}
where $e_1,\ldots,e_{n-1}$ is an orthonormal basis of $T_p M$, and the definition clearly does not depend on the choice of this orthonormal basis.
We write $\pi_M(V)$ for the orthogonal projection of the  vector $V(x) \in T_x \RR^n$ to the hyperplane $T_x M \subset T_x \RR^n$ at a point $x \in M$.
Writing $$ \div_M(V) = \sum_{i=1}^{n-1}  \langle \partial_{e_i} V, e_i \rangle $$ we have
\begin{equation}  \div_M( \rho \pi_M( V)) = \rho \div_M(\pi_M(V)) + \langle \nabla \rho,  \pi_M(V) \rangle = \div_{M, \rho}(\pi_M(V)). \label{eq_114} \end{equation}
The fact that $M$ is a weighted minimal hypersurface implies that
\begin{equation}
\div_{M,\rho}(  V) = \div_{M,\rho}( \pi_M(V)). \label{eq_1147}
\end{equation}
Indeed, by linearity it suffices to prove (\ref{eq_1147}) for a vector field $V$ that satisfies $\pi_M(V) = 0$.
Thus, locally near a point $x \in M$, we may assume that the restriction of $V$ to $M$ takes the form $V = f N$, where $f$ is a scalar function  and $N$ is a unit normal to $M$,  defined and smooth in a neighborhood of $x$.
 In this case, at $x \in M$,
$$ \div_{M,\rho}(V) =
\sum_{i=1}^{n-1} \rho \langle \partial_{e_i} (f N), e_i \rangle + f \langle \nabla \rho, N \rangle = f (-\rho H + \langle \nabla \rho, N \rangle ) = -f H_{\rho} = 0, $$
where the usual mean curvature $H$  and the weighted mean curvature $H_{\rho}$ are computed with respect to the normal $N$. The important equality (\ref{eq_1147}) is thus proven. From (\ref{eq_114}), (\ref{eq_1147}) and the divergence theorem on $M$ we conclude that
\begin{equation}  \int_{\partial M} \langle V, \nu \rangle \rho =
\int_{\partial M} \langle \rho \pi_M(V), \nu \rangle  =
\int_M \div_M( \rho \pi_M( V))
= \int_M \div_{M, \rho}( V) \label{eq_1231}
\end{equation}
for any smooth vector field $V$ in $\RR^n$, where $\nu$ is the outer unit normal to $\partial M$ relative to $M$, and where the
integrals are carried out with respect to the Hausdorff measures of the corresponding dimensions.

\begin{proposition}
	Let $P: \RR^n \rightarrow \RR$ be a a harmonic, $k$-homogeneous polynomial with $k \geq 1$. Fix a regular value $t_0 \neq 0$ of the polynomial $P$ and set
$M = \{ P = t_0 \}$. For $r > 0$ denote $M_r = M \cap B(0, r)$.  Then for any $r > 0$ for which $M_r \neq \emptyset$,
$$  \frac{n+k-2}{r} \int_{M_r} |\nabla P| = \frac{d}{dr} \left[ \int_{M_r} |\nabla P| - k^2 t_0^2 \int_{M_r}
\frac{1}{ |x|^2 |\nabla P|}  \right], $$ \label{prop_1134}
where all integrals are carried out with respect to the $(n-1)$-dimensional Hausdorff measure.
\end{proposition}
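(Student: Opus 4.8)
The plan is to apply the weighted divergence identity (\ref{eq_1231}) to the bounded hypersurface $M_r$, taking as vector field the radial (dilation) field $V(x) = x$, and then to convert the resulting boundary integral over $\partial M_r = M \cap \partial B(0,r)$ into a derivative in $r$ via the coarea formula. The hypotheses of Section \ref{sec5} are available here: since $P$ is harmonic, Proposition \ref{prop_1444} gives $H_\rho = -\Delta P = 0$ on $M$ with the weight $\rho = |\nabla P|$, so $M$ is a weighted minimal hypersurface and (\ref{eq_1231}) applies with $M_r$ in place of $M$. I would fix a radius $r$ at which $\partial B(0,r)$ meets $M$ transversally, so that $\partial M_r$ is a smooth $(n-2)$-manifold; by Sard's theorem this holds for all but a measure-zero set of radii, and the final identity will extend to the remaining ones by continuity.

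First I would compute $\div_{M,\rho}(V)$ for $V(x) = x$ from definition (\ref{eq_124}). The tangential term contributes $\sum_{i=1}^{n-1} \rho \langle \partial_{e_i} x, e_i \rangle = (n-1)\rho$, while $\langle \nabla \rho, x \rangle = (k-1)\rho$ by Euler's identity, since $\rho = |\nabla P|$ is homogeneous of degree $k-1$. Hence $\div_{M,\rho}(V) = (n+k-2)\rho$, and (\ref{eq_1231}) yields the master identity
$$ (n+k-2)\int_{M_r} |\nabla P| = \int_{\partial M_r} \langle x, \nu \rangle \, |\nabla P|, $$
where $\nu$ is the outer unit conormal of $\partial M_r$ in $M$. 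Writing $x = \pi_M(x) + \langle x, N \rangle N$ with $N = \nabla P/|\nabla P|$, and noting that on the sphere $\nu = \pi_M(x)/|\pi_M(x)|$, I obtain $\langle x, \nu \rangle = |\pi_M(x)|$.

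Next I would differentiate the bracketed expression using the coarea formula on $M$ applied to $u(x) = |x|$. Since $\nabla_M u = \pi_M(x)/|x|$, one has $|\nabla_M u| = |\pi_M(x)|/|x|$, so for any integrand $g$,
$$ \frac{d}{dr}\int_{M_r} g = \int_{\partial M_r} \frac{g\,|x|}{|\pi_M(x)|}, $$
and on $\partial M_r$ we have $|x| = r$. Applying this to $g = |\nabla P|$ and to $g = 1/(|x|^2 |\nabla P|)$, and comparing with the master identity after clearing the factor $1/r$, the claim reduces to the single pointwise relation, valid on $\partial M_r$,
$$ \bigl(|x|^2 - |\pi_M(x)|^2\bigr)\,|\nabla P|^2 = k^2 t_0^2. $$
This is where homogeneity is decisive: $|x|^2 - |\pi_M(x)|^2 = \langle x, N \rangle^2 = \langle x, \nabla P \rangle^2 / |\nabla P|^2$, and Euler's identity gives $\langle x, \nabla P \rangle = k P = k t_0$ on $M$, which closes the relation exactly.

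The only genuine difficulty is the bookkeeping of the boundary and the justification of the coarea differentiation, rather than any deep computation. For transversal radii $r$ the set $\partial M_r$ is a smooth submanifold, the divergence identity (\ref{eq_1231}) is legitimate, and the coarea derivative is valid; all integrands are smooth on $M$ because $t_0 \neq 0$ is a regular value, so $|\nabla P| \neq 0$ on $M$ and $1/|\nabla P|$ is well-defined and smooth there. Assembling the master identity with the two coarea derivatives then produces precisely
$$ \frac{n+k-2}{r}\int_{M_r}|\nabla P| = \frac{d}{dr}\left[\int_{M_r}|\nabla P| - k^2 t_0^2 \int_{M_r}\frac{1}{|x|^2|\nabla P|}\right], $$
as asserted.
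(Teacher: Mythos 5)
Your computation is correct and follows the paper's route almost step for step: the same vector field $V(x)=x$ in the weighted divergence identity (\ref{eq_1231}), the same evaluation $\div_{M,\rho}(V)=(n+k-2)\rho$ via homogeneity of $\rho=|\nabla P|$, and the same use of Euler's identity to turn $\langle x,\nabla P\rangle$ into $k t_0$. The only substantive difference is organizational, and it is where your write-up is slightly weaker than the paper's. You differentiate the two integrals $\int_{M_r}|\nabla P|$ and $\int_{M_r}|x|^{-2}|\nabla P|^{-1}$ separately via the coarea formula, which produces boundary densities carrying a factor $1/|\pi_M(x)|$; this is legitimate only at radii where $\partial B(0,r)$ meets $M$ transversally (and even then tacitly uses that the critical set of $|x|\big|_M$ is $\mathcal{H}^{n-1}$-null, which deserves a word --- it follows from real-analyticity, since otherwise a component of $M$ would lie in a sphere and $P|_{S^{n-1}}$ would be constant on an open set). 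Your appeal to ``continuity'' for the exceptional radii does not quite close the argument, because the proposition asserts the identity for \emph{every} $r$ with $M_r\neq\emptyset$, and at an exceptional radius you have not shown that the derivative on the right-hand side exists. The paper's fix is exactly the combination your pointwise relation $(|x|^2-|\pi_M(x)|^2)|\nabla P|^2=k^2t_0^2$ suggests: it applies the coarea formula once to the single integrand $|\pi_M\nabla f|^2\,|\nabla P|$ (with $f(x)=|x|$), whose coarea density $|\pi_M\nabla f|\,|\nabla P|$ is bounded and vanishes on the critical set, and then checks that $r\mapsto\int_{M_r}|\nabla P|$ is continuous (using that $M\cap\partial B(0,r)$ is $\mathcal{H}^{n-1}$-null, extracted from Lemma \ref{lem_325}); this yields differentiability of the bracketed difference at \emph{all} $r$, as in (\ref{eq_1054})--(\ref{eq_658}). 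So: same proof in substance, but you should recombine the two terms \emph{before} differentiating rather than after.
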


\begin{proof} 	Consider the weight function $\rho = |\nabla P|$, which is positively-homogeneous of degree $k-1$ in $\RR^n$ and is smooth in a neighborhood of $M$.
Then for the vector field $V(x) = x$,
	$$ \div_{M, \rho}(V) = \rho (n-1) + \langle \nabla \rho, x \rangle  = (n+k-2) \rho, $$
	since $\rho$ is positively-homogensous of degree $k-1$.
	The set $M$ is a closed, smooth hypersurface in $\RR^n$.
	 The set $M_r$ is a smooth hypersurface whose boundary $\partial M_r$ relative to $M_r$ is contained in $\partial B(0,r)$. Since $P$ is harmonic,
	Proposition \ref{prop_1444} implies that $H_{\rho} = 0$ on $M_r$, and hence $M_r$ is a weighted minimal hypersurface. We will use the divergence theorem in the form of formula (\ref{eq_1231}) for the vector field $V(x) = x$, and obtain
\begin{equation} (n+k-2)  \int_{M_r} \rho
= \int_{M_r} \div_{M, \rho}( V) = \int_{\partial M_r} \langle x, \nu \rangle \rho. \label{eq_1036}
\end{equation}
Write $f(x) = |x|$ for $x \in \RR^n$, so $\nabla f = x / |x|$ for $x \neq 0$. Recall that $\pi_M: T_x \RR^n \rightarrow T_x M$ is the orthogonal projection operator for $x \in M$.
Since $M_r = M \cap \{ f < r \}$, the outer unit normal to $\partial M_r$ relative to $M_r$ is
$$ \nu = \frac{\pi_M \nabla f}{|\pi_M \nabla f|}, $$
at any $x \in \partial M_r$ which is a regular point of the function $f|_M: M \rightarrow \RR$.
From (\ref{eq_1036}), for any  regular value $r > 0$ of the function $f|_M$,
\begin{equation}
(n+k-2) \int_{M_r} \rho = \int_{\partial M_r} \langle x,\nu \rangle \rho = \int_{\partial M_r}  \left \langle |x| \nabla f, \frac{ \pi_M \nabla f }{|\pi_M \nabla f|} \right \rangle \rho =
r \int_{\partial M_r} |\pi_M \nabla f| \rho. \label{eq_654}
\end{equation}
Next we use the coarea formula,  whose particular case appears as equation (\ref{eq_444}) above. Note that $\pi_M \nabla f$ is the Riemannian gradient of the function $f$ on the manifold $M$. By substituting
$g = \rho |\pi_M \nabla f|$ in \cite[Theorem 3.1]{federer}, we see that for any $R > 0$,
\begin{equation}  \int_{M_R} |\pi_M \nabla f|^2 \rho = \int_0^R \left(  \int_{\partial M_r} |\pi_M \nabla f| \rho \right) dr = \int_0^R \left( \frac{n+k-2}{r} \int_{M_r}\rho \right)dr, \label{eq_1054} \end{equation}
where we used (\ref{eq_654}) in the last passage, in accordance with Sard's lemma which states that almost any value $r \in (0, R)$ is a  regular value of the function $f|_M$.  We claim that the function $r \mapsto \int_{M_r} \rho$ is a continuous function of $r > 0$. By the Lebesgue bounded convergence theorem, this would follow once we know that the set $\partial M_r$ has a zero $(n-1)$-dimensional measure. However, if $\partial M_r$ had a positive $(n-1)$-dimensional measure, then the restriction of $P$ to $S^{n-1}$ would be a spherical harmonic with a level set of positive volume, in contradiction to Lemma \ref{lem_325}.

\medskip Thus the integrand on the right-hand side of (\ref{eq_1054}) is a continuous, non-negative function of $r$. We may therefore differentiate (\ref{eq_1054}), and obtain that for all $r >0$,
\begin{equation} \frac{d}{dr} \int_{M_r}
|\pi_M \nabla f|^2 \rho =  \frac{n+k-2}{r} \int_{M_r} \rho. \label{eq_1101_} \end{equation}
The vector field $\nabla P / |\nabla P|$ is a unit normal to $M$  while $\nabla f = x / |x|$. Therefore, at $x \in M$,
\begin{equation}  |\pi_M \nabla f|^2 = |\nabla f|^2 - \left \langle \nabla f, \frac{\nabla P}{|\nabla P|} \right \rangle^2 = 1 - \frac{k^2 P^2}{|x|^2 |\nabla P|^2}
= 1 - \frac{k^2 t_0^2}{|x|^2 |\nabla P|^2}. \label{eq_658} \end{equation}
From (\ref{eq_1101_}) and (\ref{eq_658}),
$$ \frac{n+k-2}{r} \int_{M_r} \rho = \frac{d}{dr} \left[ \int_{M_r} \rho - k^2 t_0^2 \int_{M_r} \frac{\rho}{|x|^2 |\nabla P|^2} \right], $$
		completing the proof.
\end{proof}

Our proof of Theorem \ref{thm_602} is similar to the proof of the monotonicity formula in minimal surface theory (see e.g. Simon \cite[Chapter 4]{simon}).

\begin{proof}[Proof of Theorem \ref{thm_602}] Consider first the case where $t$ is a regular, non-zero value of $P$. Denote
	$I(r) = \int_{M_r} |\nabla P|$
	and $e = n+k-2$. Let $r_0 = \inf_r M_r \cap B(0,r) \neq \emptyset$. According to Proposition \ref{prop_1134}, for any $r > r_0$,
\begin{equation}  \frac{d}{dr} \left[ \frac{I(r)}{r^{e}} \right] = \frac{k^2 t_0^2}{r^{e}} \cdot \frac{d}{dr} \int_{M_r} \frac{1}{|x|^2 |\nabla P| } > 0. \label{eq_1145} \end{equation}
Thus the function $I(r)/ r^e$ vanishes in $(0, r_0)$, and it is increasing in $[r_0, \infty)$, as advertised.

\medskip It still remains to deal with the case where $t$ is a non-zero, non-regular value of $P$ (the case where $t = 0$ is trivial, as remarked above). An argument analogous to the proof of Lemma \ref{lem_325} shows that for any fixed $r > 0$, the function
$$ t \mapsto \int_{\{ P = t \} \cap B(0,r)} |\nabla P| $$
is continuous in $t$. This implies the closedness of the set of all $t \in \RR$ for which the monotonicity  conclusion of the theorem holds true. According to Sard's lemma, the collection of regular, non-zero values is a dense set. Hence the conclusion of the theorem holds true for all values of $t$.	
\end{proof}

It is possible to rephrase Theorem \ref{thm_602} in terms of a spherical harmonic $p: S^{n-1} \rightarrow \RR$ of degree $k \geq 1$, rather than in terms of its $k$-homogeneous extension $P: \RR^n \rightarrow \RR$, which is a solid spherical harmonic. Fix $t > 0$ and set $Z_t = \{ P = t \} \subseteq \RR^n$. By applying the change-of-variables $y = r x / |x|$ we obtain
$$		\int_{Z_t \cap B(0,r)} |\nabla P| = \int_{\{ P \geq t \} \cap \partial B(0, r)} \left( \frac{t}{P} \right)^{(n-1)/k} \cdot \frac{1}{\left| \left \langle \frac{x}{|x|}, \frac{\nabla P}{|\nabla P|} \right \rangle \right| } \cdot \left( \frac{t}{P} \right)^{(k-1)/k} \cdot |\nabla P|.  $$
Consequently,
\begin{equation}  r^{-(n+k-2)} \int_{Z_t \cap B(0,r)} |\nabla P| =
		\frac{(t/r^k)^{(n+k-2)/k}}{k}  	\cdot
		\int_{\{ p \geq t / r^{k}   \} \cap S^{n-1}} \frac{|\nabla_S p|^2 + k^2 p^2}{p^{(n+2k-2)/k}}, \label{eq_425}  \end{equation}
		where $\nabla_S p$ is the spherical gradient of $p$, and where we used the fact that $|\nabla P|^2 = |\nabla_S p|^2 + k^2 p^2$ on the sphere $S^{n-1}$.
		We write $\Delta_S$ for the Riemannian Laplacian on $S^{n-1}$.
		Theorem \ref{thm_602} thus implies the following:
		
		\begin{corollary} Let $p: S^{n-1} \rightarrow \RR$ be a spherical harmonic of degree $k \geq 1$, i.e., $\Delta_{S} p = -k(n+k-2) p$. Then
			$$ \eps \mapsto \eps^{(n+k-2)/k} \int_{\{ p \geq \eps   \} \cap S^{n-1}} \frac{|\nabla_S p|^2 + k^2 p^2}{p^{(n+2k-2)/k}} $$
			is a non-increasing function of $\eps \in (0, \infty)$, where the integral is carried out with respect to the uniform measure on $S^{n-1}$, the $n$-dimensional Hausdorff measure. \label{cor_929}
		\end{corollary}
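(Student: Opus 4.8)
The plan is to read the corollary off directly from Theorem \ref{thm_602} via the identity (\ref{eq_425}), which has already been established for any fixed $t > 0$ through the change of variables $y = rx/|x|$. The only genuine content is to track the substitution $\eps = t/r^k$ and to verify that the direction of monotonicity flips correctly under this reparametrization.

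First I would fix $t > 0$ and abbreviate $e = n+k-2$, together with
$$ F(\eps) = \eps^{e/k} \int_{\{ p \geq \eps \} \cap S^{n-1}} \frac{|\nabla_S p|^2 + k^2 p^2}{p^{(n+2k-2)/k}}. $$
Rewriting (\ref{eq_425}) in this notation gives
$$ \frac{1}{r^{e}} \int_{Z_t \cap B(0,r)} |\nabla P| = \frac{1}{k} \, F\!\left( \frac{t}{r^k} \right) \qquad (r > 0), $$
where $Z_t = \{ P = t \}$ is the solid level set. By Theorem \ref{thm_602} the left-hand side is a non-decreasing function of $r$ on $(0, \infty)$, and hence so is $r \mapsto F(t/r^k)$.

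Next I would exploit that, for the fixed $t > 0$, the map $r \mapsto \eps = t/r^k$ is a strictly decreasing bijection of $(0, \infty)$ onto $(0, \infty)$. Therefore $r \mapsto F(t/r^k)$ being non-decreasing is equivalent to $F$ being non-increasing in $\eps$: given $\eps_1 > \eps_2$, one chooses $r_1 < r_2$ with $t/r_i^k = \eps_i$, and monotonicity in $r$ yields $F(\eps_1) \leq F(\eps_2)$. Since this single value of $t$ already allows $\eps = t/r^k$ to sweep out the entire half-line $(0, \infty)$, the conclusion holds on all of $(0,\infty)$ and the particular choice of $t$ is immaterial.

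The argument carries no real analytic obstacle: the substantive monotonicity is supplied by Theorem \ref{thm_602}, and continuity at non-regular levels is already absorbed into that theorem's proof (through the continuity statement analogous to Lemma \ref{lem_325}). The one point demanding care is the bookkeeping, namely checking that the exponent $(n+k-2)/k$ attached to $\eps$ is precisely the one produced by the factor $r^{-(n+k-2)}$ under the substitution $\eps = t/r^k$, and that the strictly decreasing reparametrization converts ``non-decreasing in $r$'' into ``non-increasing in $\eps$''. Both are routine once the substitution is written down explicitly.
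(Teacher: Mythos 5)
Your proposal is correct and follows exactly the paper's route: the corollary is read off from identity (\ref{eq_425}) by observing that its left-hand side is non-decreasing in $r$ (Theorem \ref{thm_602}) while its right-hand side depends only on $t/r^k$, so the strictly decreasing reparametrization $r \mapsto t/r^k$ converts this into the claimed non-increase in $\eps$. No further comment is needed.
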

	
\begin{proof}[Proof of Corollary \ref{cor_834}] From Theorem \ref{thm_602} we know that the left-hand side of (\ref{eq_425}) is non-decreasing in $r > 0$ for any fixed positive $t$.
However, the right-hand side of (\ref{eq_425}) is a function of $t / r^k$, and consequently it is non-increasing in $t > 0$ for any fixed positive $r$. Setting $r = 1$, we see that the function
\begin{equation}  \psi(t) =  \int_{Z_t \cap B(0,1)} |\nabla P| \qquad \qquad (t \in \RR) \label{eq_850}
\end{equation}
is non-increasing in $[0, \infty)$. The proof that the function $\psi$ is non-decreasing in $(-\infty, 0]$ is completely analogous.
Thus the function in (\ref{eq_852}) is unimodal with a maximum at zero.
As in the proof of Theorem \ref{thm_1033}, from the coarea formula we conclude that the value distribution density of $P$ under $\mu$ is a constant multiple of
$\psi$. The proof is complete.
\end{proof}

\begin{remark}{\rm  It was suggested to us by Emanuel Milman to look at the case of Robin boundary conditions, since these are satisfied by solid spherical harmonics. This indeed leads to an alternative, much simpler  proof of Corollary \ref{cor_834}. Recall that a function $f$ on a manifold $M$ with boundary $\partial M$ is said to satisfy the Robin boundary conditions if
		$$ a f + b \langle \nabla f, N \rangle = 0 \qquad \qquad \textrm{on} \ \partial M. $$
for some parameters $a,b \in \RR$ with $a^2 + b^2 > 0$. A solid spherical harmonic $f: \RR^n \rightarrow \RR$ of degree $k$ on the unit ball $B(0,1)$ satisfies the Robin boundary conditions with $a = k, b = -1$. Arguing as in the proof of Theorem \ref{thm_1033}, for $t_2 > t_1 > 0$,
$$ \int_{Z_{t_2}} |\nabla f| - \int_{Z_{t_1}} |\nabla f| = -\int_{M_{t_1, t_2} \cap \partial B(0,1)} k f + \int_{M_{t_1, t_2} \cap B(0,1)} \Delta f = -\int_{M_{t_1, t_2} \cap \partial B(0,1)} k f < 0,	$$
with $M_{t_1, t_2} = \{ x \in \RR^n \, ; \, t_1 < f(x) < t_2 \}$ and $Z_{t_i} = \{ x \in B(0,1) \, ; \, f(x) = t_i \}$. This immediately implies Corollary \ref{cor_834}, which in turn is equivalent to Theorem \ref{thm_602}.	
}\end{remark}

\bigskip
\noindent Department of Mathematics, Weizmann Institute of Science, Rehovot 76100, Israel. \\
{\it e-mail:} \verb"boaz.klartag@weizmann.ac.il"

\end{document}